\newcommand{\showcomments}{yes}
\newsavebox{\commentbox}
\newenvironment{comment}%
{\ifthenelse{\equal{\showcomments}{yes}}%
{\footnotemark
    \begin{lrbox}{\commentbox}
    \begin{minipage}[t]{1.25in}\raggedright\sffamily\upshape\tiny
    \footnotemark[\arabic{footnote}]}
{\begin{lrbox}{\commentbox}}}%
{\ifthenelse{\equal{\showcomments}{yes}}%
{\end{minipage}\end{lrbox}\marginpar{\usebox{\commentbox}}}
{\end{lrbox}}}
\begin{document}

\newcommand{\mmbox}[1]{\mbox{${#1}$}}
\newcommand{\proj}[1]{\mmbox{{\mathbb P}^{#1}}}
\newcommand{\Cr}{C^r(\Delta)}
\newcommand{\CR}{C^r(\hat\Delta)}
\newcommand{\affine}[1]{\mmbox{{\mathbb A}^{#1}}}
\newcommand{\Ann}[1]{\mmbox{{\rm Ann}({#1})}}
\newcommand{\caps}[3]{\mmbox{{#1}_{#2} \cap \ldots \cap {#1}_{#3}}}
\newcommand{\N}{{\mathbb N}}
\newcommand{\Z}{{\mathbb Z}}
\newcommand{\R}{{\mathbb R}}
\newcommand{\Tor}{\mathop{\rm Tor}\nolimits}
\newcommand{\Ext}{\mathop{\rm Ext}\nolimits}
\newcommand{\Hom}{\mathop{\rm Hom}\nolimits}
\newcommand{\im}{\mathop{\rm Im}\nolimits}
\newcommand{\rank}{\mathop{\rm rank}\nolimits}
\newcommand{\supp}{\mathop{\rm supp}\nolimits}
\newcommand{\arrow}[1]{\stackrel{#1}{\longrightarrow}}
\newcommand{\CB}{Cayley-Bacharach}
\newcommand{\coker}{\mathop{\rm coker}\nolimits}
\sloppy
\newtheorem{defn0}{Definition}[section]
\newtheorem{prop0}[defn0]{Proposition}
\newtheorem{conj0}[defn0]{Conjecture}
\newtheorem{thm0}[defn0]{Theorem}
\newtheorem{lem0}[defn0]{Lemma}
\newtheorem{corollary0}[defn0]{Corollary}
\newtheorem{example0}[defn0]{Example}
\newtheorem{note0}[defn0]{Note}
\newtheorem{question0}[defn0]{Question}

\newenvironment{defn}{\begin{defn0}}{\end{defn0}}
\newenvironment{prop}{\begin{prop0}}{\end{prop0}}
\newenvironment{conj}{\begin{conj0}}{\end{conj0}}
\newenvironment{question}{\begin{question0}}{\end{question0}}
\newenvironment{thm}{\begin{thm0}}{\end{thm0}}
\newenvironment{lem}{\begin{lem0}}{\end{lem0}}
\newenvironment{cor}{\begin{corollary0}}{\end{corollary0}}
\newenvironment{exm}{\begin{example0}\rm}{\end{example0}}
\newenvironment{note}{\begin{note0}\rm}{\end{note0}}

\newcommand{\defref}[1]{Definition~\ref{#1}}
\newcommand{\propref}[1]{Proposition~\ref{#1}}
\newcommand{\thmref}[1]{Theorem~\ref{#1}}
\newcommand{\lemref}[1]{Lemma~\ref{#1}}
\newcommand{\noteref}[1]{Note~\ref{#1}}
\newcommand{\corref}[1]{Corollary~\ref{#1}}
\newcommand{\exref}[1]{Example~\ref{#1}}
\newcommand{\secref}[1]{Section~\ref{#1}}

\newcommand{\std}{Gr\"{o}bner}
\newcommand{\jq}{J_{Q}}



\title {From Spline Approximation to Roth's Equation and Schur Functors}

\author{J\'{a}n Min\'{a}\v{c}}

\author{\c Stefan O. Toh\v aneanu}

\subjclass[2000]{Primary 41A15; Secondary 13D40, 52B20, 15A23}
\keywords{bivariate spline, Hilbert function, Schur modules, LU decomposition, positivity, Toeplitz matrix}
\thanks{J\'{a}n Min\'{a}\v{c}: Department of Mathematics, Western University, London, Ontario N6A 5B7, Canada. Email: minac@uwo.ca, Phone: 519-661-2111, Ext: 86519, Fax: 519-661-3610.}
\thanks{\c Stefan O. Toh\v aneanu (corresponding author): Department of Mathematics, Western University, London, Ontario N6A 5B7, Canada. Email: stohanea@uwo.ca, Phone: 519-661-2111, Ext: 86528, Fax: 519-661-3610.}

\begin{abstract}
\noindent Alfeld and Schumaker provide a formula for the dimension of the space of piecewise polynomial functions, called splines, of degree $d$ and smoothness $r$ on a generic triangulation of a planar simplicial complex $\Delta$, for $d \geq 3r+1$. Schenck and Stiller conjectured that this formula actually holds for all $d \geq 2r+1$. Up to this moment there was not known a single example where one could show that the bound $d\geq 2r +1$ is sharp. However, in 2005, a possible such example was constructed to show that this bound is the best possible (i.e., the Alfeld-Schumaker formula does not hold if $d=2r$), except that the proof that this formula actually works if $d\geq 2r+1$ has been a challenge until now when we finally show it to be true. The interesting subtle connections with representation theory, matrix theory and commutative and homological algebra seem to explain why this example presented such a challenge. Thus in this paper we present the first example when it is known that the bound $d\geq 2r+1$ is sharp for asserting the validity of the Alfeld-Schumaker formula.
\end{abstract}
\maketitle


\section{Introduction and background}

Let $\Delta$ be a connected finite simplicial complex whose geometric realization $|\Delta|$ is a topological disk in $\mathbb R^2$. Let $r\geq0$ be an integer. The space of splines of smoothness $r$ and degree $d$ is the $\mathbb R-$ vector space $$C_d^r(\Delta)=\{F:|\Delta|\longrightarrow\mathbb R: F|_{\sigma}=\mbox{ polynomial of degree }\leq d,$$ $$\forall \sigma\in\Delta_2,\mbox{ and }F\in C^r\}.$$ A very nice accessible introduction to Polynomial Splines is \cite[Chapter 8]{clo}.

One of the major questions in spline approximation is to find the dimension of this vector space; even when $d=3$ and $r=1$, this dimension is not known for arbitrary triangulations. If $d\geq 3r+1$, for almost all triangulations, Alfeld and Schumaker (\cite{as}) give a beautiful, yet complicated formula for this dimension in terms of combinatorial and local geometric data (data depending only on local geometry at the interior vertices of $\Delta$):

$$\dim C_d^r(\Delta)={{d+2}\choose{2}}+{{d-r+1}\choose{2}}f_1^0-\left({{d+2}\choose{2}}-{{r+2}\choose{2}}\right)f_0^0 +\sigma,$$ where $f_1^0$ is the number of interior edges of $\Delta$, $f_0^0$ is the number of interior vertices of $\Delta$, and $\sigma=\sum\sigma_i$, where $\sigma_i=\sum_{j\geq 1}\max\{(r+1+j(1-n(v_i))),0\}$, and $n(v_i)$ is the number of distinct slopes at the interior vertex $v_i$. For further reference this formula will be denoted by $L(\Delta,r,d)$.

In \cite{ss1}, by showing that a certain zeroth local cohomology is zero, Schenck and Stillman prove that if $\Delta$ has only pseudoedges (such a triangulation is called {\em quasi-cross-cut}), the Alfeld-Schumaker formula is true for any $d$. With different methods, in \cite{swy} the same is true, but for the more general case when instead of a triangulation, one has a partition. Lemma 2.5 in \cite{ss1} also says that if $\Delta$ has at least one non-pseudoedge, the local cohomology module considered is not zero. In fact, Schenck and Stiller conjectured that for any $\Delta$, this local cohomology module vanishes in degree $d\geq 2r+1$.

\cite{ss} considered a simplicial complex $\Delta_S$ with exactly one non-pseudoedge, and in \cite{to} it was shown that for this particular example, the above conjecture is tight: for any $r\geq 1$, $\dim C_{2r}^r(\Delta_S)\neq L(\Delta_S,r,2r)$. The present notes are a followup of \cite{to}. We show that for the same simplicial complex $\Delta_S$, $$\dim C_{d}^r(\Delta_S)= L(\Delta_{S},r,d),\mbox{ for any }r \mbox{ and }d\geq 2r+1,$$ and therefore the Schenck-Stiller conjecture is true for this first non-trivial triangulation $\Delta_S$\footnote{See Subsection 1.2.}.

The confirmation of the Schenck-Stiller conjecture in this case proved to be surprisingly challenging, and in fact, it took several years to establish the main result of this paper. The reason for this difficulty seems to lie in rather deep connections with representation theory, matrix theory and commutative algebra.

The proof is subtle in several places, but we put considerable effort into making our exposition clear and readable also for the non-specialists. In the first part we provide the solution of the main problem that relies on classical concepts in commutative algebra (e.g., regular sequences, monomial order, etc.). In the second part of the paper we investigate the connections with Schur functors, Roth's equation in matrix theory and lower-upper triangular matrix decompositions.

\subsection{Homological approach to spline approximation}

Following \cite{br91}, consider $\mathbb R^2$ embedded in $\mathbb R^3$, and let $\hat{\Delta}$ be the cone of $\Delta$ with its origin in $\mathbb R^3$ and let $R=\mathbb R[x,y,z]$ be the ring of polynomials in variables $x,y,z$ with coefficients in $\mathbb R$. Since we consider the cone of $\Delta$, from now on if $e$ is an edge of $\Delta$ we will think of $\ell_e$ to be the homogenized equation of the equation of the line in $\mathbb R^2$ where $e$ is placed; also, abusing the terminology a bit, we are going to say that the linear form $\ell_e$ defines the edge $e$.

Consider the finitely generated graded $R-$module: $$C^r(\hat{\Delta})=\{F:|\hat{\Delta}|\longrightarrow\mathbb R: F|_{\hat{\sigma}}\in R, \forall \sigma\in\Delta_2, \mbox{ and }F\in C^r\}.$$ Then $$\dim_{\mathbb R}C_d^r(\Delta)=\dim_{\mathbb R}C^r(\hat{\Delta})_d,$$ the dimension of the degree $d$ piece of graded module $C^r(\hat{\Delta})$. So, by taking $\hat{\Delta}$ the cone of $\Delta$, we homogenized our polynomials and therefore the problem is translated into a homological algebra problem: to find the Hilbert function of a graded module \footnote{ If $M=\oplus_{d\in\mathbb Z}M_d$ is a finitely generated graded $R-$module, the Hilbert function of $M$ in degree $d$ is by definition $HF(M,d)=\dim_{\mathbb R}M_d$. See, for example, \cite{s} for more background on Hilbert functions.}.

Let $F\in C^r(\hat{\Delta})_d$. Piecewise, on each triangle $T_i$ of $\Delta$, $F$ is defined by a homogeneous polynomial of degree $d$: $F_i\in R_d$. For $F$ to be a $C^r-$function, since polynomials are $C^{\infty}-$functions, whenever we have two triangles $T_i$ and $T_j$, with a common (interior) edge of equation $\ell_{ij}=0$, then $$F_i-F_j\in\langle \ell_{ij}^{r+1}\rangle.$$ For example, if $r=0$ (i.e., $F$ is continuous), one must have $F_i(P)=F_j(P),$ for all $P\in V(\ell_{ij})$. But this means exactly that $F_i-F_j\in \langle \ell_{ij}\rangle.$

With the above idea in mind, Billera and Rose (\cite{br2}) place $C^r(\hat{\Delta})$ in the following exact complex of graded $R-$modules: $$0\rightarrow C^r(\hat{\Delta})\rightarrow R^{f_2}\oplus R^{f_1^0}(-r-1)\stackrel{\phi}\rightarrow R^{f_1^0}\rightarrow N\rightarrow 0,$$ where $N=coker(\phi)$, and $$\phi=\left(\begin{array}{ccccc}
&&\ell_{e_1}^{r+1}&&\\
\partial_2&|&&\ddots&\\
&&&&\ell_{e_{f_1^0}}^{r+1}
\end{array}\right).$$ $\partial_2$ is the simplicial reduced boundary map $R^{f_2}\rightarrow R^{f_1^0}$, where $f_2$ is the number of triangles of $\Delta$ and $\ell_{e_i}$ is the linear form that defines the interior edge $e_i$.

Using the properties of the Hilbert function, we obtain $$\dim_{\mathbb R}C^r(\hat{\Delta})_d={{d+2}\choose{2}}f_2+{{d-r+1}\choose{2}}f_1^0-{{d+2}\choose{2}}f_1^0+HF(N,d).$$

In \cite{ss2}, Schenck and Stillman place the graded $R-$module $N$ in the following short exact sequence of graded $R-$modules: $$0\rightarrow H_{\langle x,y,z\rangle}^0(N)\rightarrow N\rightarrow \oplus_{v\in\Delta_0^0}R/J(v)\rightarrow 0,$$ where $H_{\langle x,y,z\rangle}^0(N)$ is the zeroth local cohomology module of $N$ at the maximal ideal $\langle x,y,z\rangle$, $\Delta_0^0$ denotes the set of interior vertices of $\Delta$, and $$J(v)=\langle \ell_{v,1}^{r+1},\ldots,\ell_{v,n(v)}^{r+1}\rangle,$$ with $\ell_{v,i}$ being the linear forms defining the interior edges of different slopes with one of the vertices being $v$.

Theorem 3.1 in \cite{ss1} describes the minimal graded free resolution of $R/J(v)$, so $HF(R/J(v)),d)$ is known. In Corollary 4.5 in \cite{ss1} all of this information is combined to obtain $$\dim_{\mathbb R}C^r(\hat{\Delta})_d=L(\Delta,r,d)+HF(H_{\langle x,y,z\rangle}^0(N),d).$$ Since $H_{\langle x,y,z\rangle}^0(N)$ is a module of finite length, we have for $d$ sufficiently large $HF(H_{\langle x,y,z\rangle}^0(N),d)=0$. In fact, Alfeld and Schumaker show that it is enough to take $d\geq 3r+1$. The Schenck-Stiller conjecture claims that one can take $d\geq 2r+1$.

\subsection{The Schenck-Stiller example}

In this section we reduce our problem to calculating the Hilbert function of some ideal in degree $r$. We will provide a detailed picture of our approach which is also accessible to a non-specialist in this area.

Let $R=\mathbb R[x,y,z]$ be the ring of polynomials with real coefficients and let $\Delta_S:=\Delta$ be the following simplicial complex that triangulates a bounded connected region in the real plane:
\begin{center}
\epsfig{figure=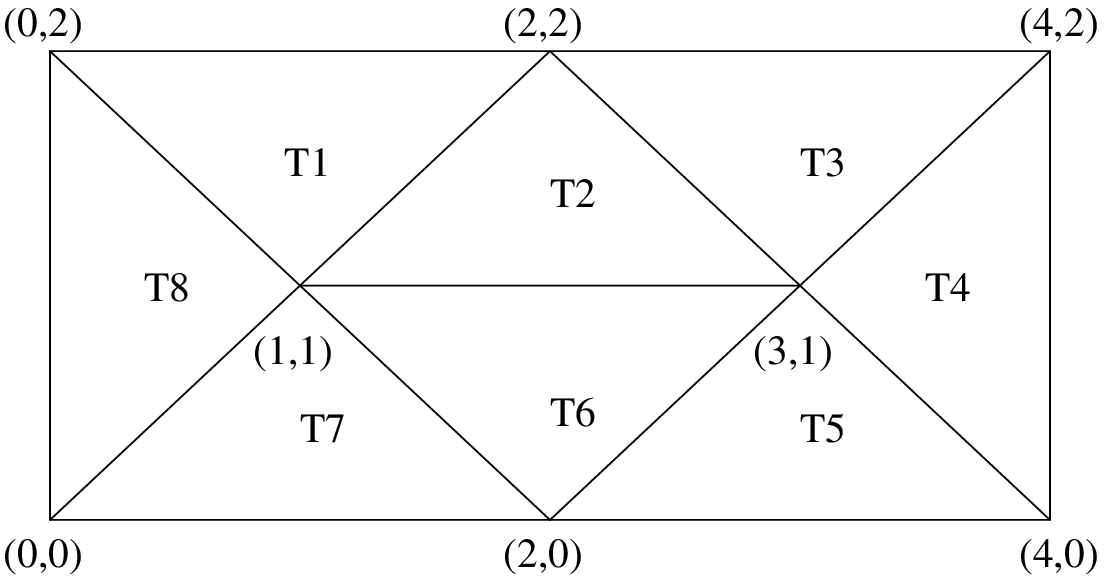,height=1.3in,width=2in}
\end{center}

The goal is to compute $\dim_{\mathbb R}C^r(\hat{\Delta})_d, d\geq 2r+1$ by showing that $HF(H_{\langle x,y,z\rangle}^0(N),d)=0, d\geq 2r+1$.

For $\Delta_S$, because it has exactly one totally interior edge, $H_{\langle x,y,z\rangle}^0(N)$ is isomorphic to $R/I$, where $I$ is a certain ideal (see the assertions before Note \ref{note:note}), and therefore it is enough to prove that $HF(H_{\langle x,y,z\rangle}^0(N),2r+1)=0$. Since our triangulation is specific, our analysis will be more direct, building on basic definitions which, via careful considerations, will lead us to our results.

Let $d=2r+1$, and let us label the triangles of $\Delta$ with $T_1,\ldots,T_8$, clockwise: for example the first triangle has vertices $(0,2),(2,2)$ and $(1,1)$. Let $\ell_{ij}$ be the linear form defining the common edge between the triangles $T_i$ and $T_j$. We have 9 interior edges, one of them being a non-pseudoedge (i.e., the edge common to the triangles $T_2$ and $T_6$; this edge is placed on the line in $\mathbb R^2$ of equation $y=1$, hence, after homogenization, $\ell_{26}=y-z$):
\begin{eqnarray} \ell_{12}&=&\ell_{78}=x-y\nonumber\\
\ell_{23}&=&\ell_{45}=x+y-4z\nonumber\\
\ell_{34}&=&\ell_{56}=x-y-2z\nonumber\\
\ell_{67}&=&\ell_{18}=x+y-2z\nonumber\\
\ell_{26}&=&y-z.\nonumber
\end{eqnarray}

An element $F\in C^r(\hat{\Delta})_d$ is an $8-$tuple: $$F=(F_1,\ldots,F_8),$$ where $F_i\in R_d$ are subject to the conditions:

\begin{eqnarray}
F_1-F_2&=&\ell_{12}^{r+1}G_{12}\nonumber\\
F_2-F_3&=&\ell_{23}^{r+1}G_{23}\nonumber\\
F_3-F_4&=&\ell_{34}^{r+1}G_{34}\nonumber\\
F_4-F_5&=&\ell_{45}^{r+1}G_{45}\nonumber\\
F_5-F_6&=&\ell_{56}^{r+1}G_{56}\nonumber\\
F_6-F_7&=&\ell_{67}^{r+1}G_{67}\nonumber\\
F_7-F_8&=&\ell_{78}^{r+1}G_{78}\nonumber\\
F_8-F_1&=&\ell_{18}^{r+1}G_{18}\nonumber\\
F_2-F_6&=&\ell_{26}^{r+1}G_{26},\nonumber
\end{eqnarray}where $G_{ij}$ are homogeneous polynomials in $R$ of degree $d-(r+1)=r$.

One should observe that once we know $F_1$ and the $G_{ij}$'s, $F$ is completely determined. The $G_{ij}$'s are subject to the following relations:
\begin{eqnarray}
(G_{12}+G_{78})\ell_{12}^{r+1}+(G_{67}+G_{18})\ell_{67}^{r+1}+G_{26}\ell_{26}^{r+1}=0,\nonumber\\
(-G_{23}-G_{45})\ell_{23}^{r+1}+(-G_{34}-G_{56})\ell_{34}^{r+1}+G_{26}\ell_{26}^{r+1}=0.\nonumber
\end{eqnarray}

From the discussions above, considering first the possibilities for $F_1$, then the possibilities for $G_{ij}$'s with $G_{26}$ fixed, and finally the possibilities for $G_{26}$ we see that the dimension we are looking for is $$\dim C^r(\hat{\Delta})_d={{d+2}\choose{2}}+4{{r+2}\choose{2}}+\epsilon,$$ where $\epsilon=\dim I_r$ with $$I=\langle \ell_{12}^{r+1},\ell_{67}^{r+1}\rangle :\ell_{26}^{r+1}\cap \langle \ell_{23}^{r+1},\ell_{34}^{r+1}\rangle :\ell_{26}^{r+1}.$$

The Alfeld-Schumaker formula says that $$L(\Delta,r,d)={{d+2}\choose{2}}+9{{d-r+1}\choose{2}}-2\left({{d+2}\choose{2}}-{{r+2}\choose{2}}\right)+\sigma,$$ and from the proof of Corollary 4.5 in \cite{ss1}, $$\sigma=2r\alpha-2\alpha^2,\mbox{ where }\alpha=\lfloor\frac{r+1}{2}\rfloor,$$ the smallest integer larger than $\frac{r+1}{2}$.

We have $d=2r+1$.

If $r+1=2n$, then $\sigma=2n^2-2n$. For this case $$\dim C^r(\hat{\Delta})_d-L(\Delta,r,d)=\epsilon-n.$$

If $r+1=2n+1$, then $\sigma=2n^2$. For this case  $$\dim C^r(\hat{\Delta})_d-L(\Delta,r,d)=\epsilon-(n+1).$$

So the Schenck-Stiller conjecture is true for this $\Delta$, if one can show that $\epsilon=n$ if $r+1=2n$, and $\epsilon=n+1$ if $r+1=2n+1$. Our goal is to prove these equalities.

\vskip .3in

In the language of the previous subsection, one needs to show that $$HF(H_{\langle x,y,z\rangle}^0(N),2r+1)=0.$$ Indeed our goal matches this new goal as we can see below.

First, by making the change of variables suggested in \cite{to}: $y-z=\frac{1}{2}\bar{y}$, $x-y=\bar{x}$, $x-y-2z=\bar{z}$, we can assume that $\ell_{26}=y$, $\ell_{12}=x$, $\ell_{67}=x+y$ and $\ell_{34}=z$, $\ell_{23}=x+z$. So the ideals $J(v_i)$ at the two interior vertices become: $$J(v_1)=\langle x^{r+1},(x+y)^{r+1},y^{r+1}\rangle,$$ and $$J(v_2)=\langle z^{r+1},(z+y)^{r+1},y^{r+1}\rangle.$$

Second, by \cite{ss1}, Theorem 3.1, the graded minimal free resolution for $R/J(v_i), i=1,2$ is $$0\rightarrow R(-a)\oplus R(-b)\stackrel{\psi_i}\rightarrow R^3(-r-1)\rightarrow R\rightarrow R/J(v_i)\rightarrow 0,$$ where $a$ and $b$ are some shifts (see Note \ref{note:note} below) and $\psi_i=\left(\begin{array}{cc} A_i&D_i\\ B_i&E_i\\C_i&F_i\end{array}\right).$ For example, if $i=1$ this means that $A_1x^{r+1}+B_1(x+y)^{r+1}+C_1y^{r+1}=0 \mbox{ and }D_1x^{r+1}+E_1(x+y)^{r+1}+F_1y^{r+1}=0.$ We should note that $$\langle C_1,F_1\rangle=\langle x^{r+1},(x+y)^{r+1}\rangle : y^{r+1},$$ and $$\langle C_2,F_2\rangle=\langle z^{r+1},(z+y)^{r+1}\rangle : y^{r+1}.$$

Third, by \cite{ss2}, $$H_{\langle x,y,z\rangle}^0(N)\approx R(-r-1)/\langle C_1,F_1,C_2,F_2\rangle.$$ With this, the Schenck-Stiller conjecture for this $\Delta$ reduces to showing that $$HF(R/\langle C_1,F_1,C_2,F_2\rangle,r)=0.$$

\vskip .2in

\begin{note}\label{note:note} By \cite{ss1}, Theorem 3.1, if $r+1=2n$, then $\deg(C_i)=\deg(F_i)=n,i=1,2$, and if $r+1=2n+1$, then $\deg(C_i)=n,\deg(F_i)=n+1, i=1,2$. Furthermore, the ideals $\langle C_i,F_i\rangle$ are complete intersections (see \cite{to}).
\end{note}

\vskip .2in

We have an exact sequence of $R-$ modules: $$0\rightarrow \frac{R}{\langle C_1,F_1\rangle\cap\langle C_2,F_2\rangle} \rightarrow \frac{R}{\langle C_1,F_1\rangle}\oplus \frac{R}{\langle C_2,F_2\rangle}\rightarrow \frac{R}{\langle C_1,F_1,C_2,F_2\rangle}\rightarrow 0.$$ Since $\langle C_i,F_i\rangle, i=1,2$ is a complete intersection, we have the graded minimal free resolution $$0\rightarrow R(-(\deg(C_i)+\deg(F_i))\rightarrow R(-\deg(C_i))\oplus R(-\deg(F_i))\rightarrow R\rightarrow R/\langle C_i,F_i\rangle\rightarrow 0,$$ and the Hilbert function of $\frac{R}{\langle C_1,F_1\rangle}\oplus \frac{R}{\langle C_2,F_2\rangle}$ can be computed using these resolutions.

Therefore, to prove the claim it will be enough to show \begin{eqnarray} HF(\frac{R}{\langle C_1,F_1\rangle\cap\langle C_2,F_2\rangle},r)&=&
2HF(\frac{R}{\langle C_1,F_1\rangle},r)\nonumber\\
&=& \left\{
                                          \begin{array}{ll}
                                           2n^2, & \hbox{if } r=2n-1; \\
                                           2n(n+1), & \hbox{if } r=2n.
                                          \end{array}
                                          \right.\nonumber
\end{eqnarray}

Equivalently, our main goal is to prove

\begin{thm}\label{thm:main} $$HF(\langle C_1,F_1\rangle\cap\langle C_2,F_2\rangle,r)=\epsilon= \left\{
                                          \begin{array}{ll}
                                           n, & \hbox{if } r=2n-1; \\
                                           n+1, & \hbox{if } r=2n.
                                          \end{array}
                                          \right.$$
\end{thm}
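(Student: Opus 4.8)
The plan is to prove the equivalent reformulation already isolated just above the statement: writing $I_1=\langle C_1,F_1\rangle$ and $I_2=\langle C_2,F_2\rangle$, it suffices to show $HF(R/(I_1+I_2),r)=0$, i.e. $(I_1+I_2)_r=R_r$. Indeed, the short exact sequence displayed before Note~\ref{note:note} gives $HF(R/(I_1\cap I_2),r)=HF(R/I_1,r)+HF(R/I_2,r)-HF(R/(I_1+I_2),r)$, and the swap $x\leftrightarrow z$ carries $J(v_1)$ to $J(v_2)$, hence $I_1$ to $I_2$, so $HF(R/I_1,r)=HF(R/I_2,r)$; combined with the complete-intersection Hilbert functions from Note~\ref{note:note}, the vanishing $HF(R/(I_1+I_2),r)=0$ becomes exactly $HF(I_1\cap I_2,r)=\epsilon$ in both parity cases. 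So from now on I would focus on the surjectivity of the multiplication map
$$\Phi\colon R_{r-\deg C_1}\oplus R_{r-\deg F_1}\oplus R_{r-\deg C_2}\oplus R_{r-\deg F_2}\longrightarrow R_r,\qquad (P_i)\mapsto P_1C_1+P_2F_1+P_3C_2+P_4F_2.$$

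The first structural point is that $C_1,F_1$ involve only $x,y$ while $C_2,F_2$ involve only $y,z$, so $I_1$ is extended from $\mathbb R[x,y]$ and $I_2$ from $\mathbb R[y,z]$; each is a complete intersection cutting out a coordinate axis, so $V(I_1+I_2)=\{0\}$ and $R/(I_1+I_2)$ has finite length, consistent with its identification with $H^0_{\langle x,y,z\rangle}(N)$. In $\mathbb R[x,y]$ the Artinian Gorenstein algebra $\mathbb R[x,y]/\langle C_1,F_1\rangle$ has socle degree $\deg C_1+\deg F_1-2=r-1$ in both cases of Note~\ref{note:note}, so every degree-$r$ form in $x,y$ already lies in $I_1$, and symmetrically every degree-$r$ form in $y,z$ lies in $I_2$. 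Thus $\Phi$ automatically hits every monomial $x^iy^j$ and $y^jz^k$ of degree $r$, and the entire content of the theorem concentrates in the mixed monomials $x^ay^bz^c$ with $a\ge1$ and $c\ge1$.

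To handle these I would dualize. By Macaulay duality, $(I_1+I_2)_r=R_r$ is equivalent to the vanishing of the apolar space $\{G\in\mathbb R[X,Y,Z]_r: C_1\circ G=F_1\circ G=C_2\circ G=F_2\circ G=0\}$, the generators acting as constant-coefficient differential operators. Slicing $G=\sum_c G_c(X,Y)Z^c$, annihilation by the complete intersection $\langle C_1,F_1\rangle$ (which only differentiates $X,Y$) forces each slice $G_c$ into the inverse system $W_1$ of $\mathbb R[x,y]/\langle C_1,F_1\rangle$; since this algebra is Gorenstein, $W_1$ is generated by a single binary form $\Theta_1$ of degree $r-1$, so $G_c$ must be an order-$(c-1)$ derivative of $\Theta_1$ and the top slice $G_0$ vanishes. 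The symmetric slicing in powers of $X$, using $\langle C_2,F_2\rangle$ and its dual generator $\Theta_2$, imposes a second family of constraints. Requiring both simultaneously converts $G=0$ into a square homogeneous linear system whose matrix is assembled from the catalecticant (derivative) matrices of the explicit binary forms $\Theta_1,\Theta_2$, whose entries are the binomial coefficients coming from the syzygies of $x^{r+1},(x+y)^{r+1},y^{r+1}$; this matrix is of structured, essentially Toeplitz, type.

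The crux, and the step I expect to be by far the hardest, is to prove that this structured matrix is nonsingular. This is exactly where the three themes of the title enter: the compatibility of the two families of derivative constraints is a Roth/Sylvester-type matrix equation, the bookkeeping of the slices $G_c$ and their symmetric partners is governed by Schur functors, and the nonsingularity of the resulting Toeplitz matrix I would establish by exhibiting an explicit lower-upper triangular decomposition and proving that all its pivots are nonzero through a positivity statement for the binomial entries. Concretely I would treat the parities $r=2n-1$ and $r=2n$ separately (the generator degrees differ), verify the predicted values $\epsilon=n$ and $\epsilon=n+1$ on the small cases $r=1,2$ as a check, and then reduce the general statement to the strict positivity of the pivots in the $LU$ factorization. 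Establishing that positivity --- not the homological reduction, which is essentially formal --- is where the real difficulty of the theorem lies.
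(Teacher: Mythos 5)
Your formal reductions are fine: the passage to $HF(R/\langle C_1,F_1,C_2,F_2\rangle,r)=0$ via the short exact sequence, the $x\leftrightarrow z$ symmetry giving $HF(R/\langle C_1,F_1\rangle,r)=HF(R/\langle C_2,F_2\rangle,r)$, the socle-degree observation, and the Macaulay dualization with $Z$-slices and $X$-slices (the latter is precisely the dual of Lemma~\ref{lem:important1}). The genuine gap is the crux itself, which you assert rather than prove. First, a bookkeeping error: the final system is not square. For $r=2n-1$ the two inverse systems are subspaces of $\mathbb R[X,Y,Z]_r$ of dimension $n^2$ each (the full length of each Gorenstein quotient), sitting in an ambient space of dimension $2n^2+n$, so what you need is injectivity of a map $\mathbb R^{n^2}\oplus\mathbb R^{n^2}\to\mathbb R^{2n^2+n}$, i.e.\ a transversality/full-rank statement for a non-square structured matrix; the honest bookkeeping (Section 3 of the paper) produces a kernel $K$ of dimension $\frac{n(n+1)}{2}$, computed via Schur-module dimensions, which the $(r-n+1)^2$ compatibility relations $\mathcal D\mathcal S=\mathcal J\mathcal Q^{T}\mathcal D^{T}\mathcal J$ must cut down exactly to $\epsilon$. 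Second, and decisively: your plan to finish by "an explicit lower-upper triangular decomposition with positive pivots" is exactly the authors' original attempt, and it is known not to close along that route. The positivity is already fully available --- by Aissen--Schoenberg--Whitney all minors of the relevant binomial Toeplitz matrix are strictly positive (Proposition~\ref{prop:propertiesU}) --- yet this only yields an LU-decomposition of $\mathcal J\mathcal U\mathcal J$, hence \emph{lower}-triangular solutions of the Roth equation $\mathcal UX-Y^T\mathcal U=C$, whereas the dimension count requires \emph{upper}-triangular solutions for every right-hand side $C$, equivalently surjectivity of $(\tilde{\mathcal S},\tilde{\mathcal Q})\mapsto\mathcal U\tilde{\mathcal S}-\tilde{\mathcal Q}^{T}\mathcal U^{T}$, equivalently an LU-decomposition of $\mathcal U$ itself. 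The paper ends Section 3 with precisely this as an open question. So the step you flag as "by far the hardest" is not merely hard: along your proposed route it is unresolved even by the authors, and "positivity of the pivots" is a red herring --- positivity holds and does not suffice, because it produces triangularity on the wrong side.

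The published proof (Section 2) succeeds by abandoning the matrix formulation entirely: the differential operator $F\mapsto yF_{xz}$ maps $K(r)$ into $K(r-1)$ (Lemma~\ref{lem:important}(3)), enabling induction on $r$; the symmetry $K(r)=\{f:f(x,y,z)=f(z,y,x)\}$ (Proposition~\ref{prop:symmetry}) and the fact that $\mathcal I(r)$ is minimally generated in degree $r$ (Proposition~\ref{prop:mindegree}) then constrain the grevlex leading monomials of degree-$r$ elements to lie in $\{y^{2n-1},xy^{2n-2},\ldots,x^{n-1}y^{n}\}$, giving $\dim K(2n-1)\leq n$ directly (Theorem~\ref{thm:odd}), with a short linear-algebra argument deducing the even case (Theorem~\ref{thm:even}); the matching lower bound is free from $HF\geq 0$ (Lemma~\ref{lem:important0}). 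To rescue your proposal you would have to prove that $\mathcal U=\mathcal J\mathcal D\bar{\mathcal N}\mathcal D$ has nonvanishing leading principal minors --- a statement not implied by total positivity of $\mathcal N$ --- or else switch to the paper's differential/Gr\"obner induction.
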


\section{Proof of the main result}

Let us denote $$K(r)=((\langle x^{r+1},(x+y)^{r+1}\rangle\cap\langle z^{r+1},(z+y)^{r+1}\rangle):y^{r+1})_r.$$ We want to show that $$\dim K(2n-1)=n\mbox{ and } \dim K(2n)=n+1.$$

We have the following sequence of useful lemmas.

\begin{lem}\label{lem:important0} For any $n\geq 1$ we have $$\dim K(2n-1)\geq n\mbox{ and } \dim K(2n)\geq n+1.$$
\end{lem}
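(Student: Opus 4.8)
The plan is to reduce the lower bound to an entirely elementary dimension count between two complete intersections, with no explicit construction of elements needed. First, since the colon ideal distributes over intersections, $(\mathfrak a\cap\mathfrak b):\mathfrak s=(\mathfrak a:\mathfrak s)\cap(\mathfrak b:\mathfrak s)$, and since we already know $\langle C_1,F_1\rangle=\langle x^{r+1},(x+y)^{r+1}\rangle:y^{r+1}$ and $\langle C_2,F_2\rangle=\langle z^{r+1},(z+y)^{r+1}\rangle:y^{r+1}$, I would identify
$$K(r)=\left(\langle C_1,F_1\rangle\cap\langle C_2,F_2\rangle\right)_r.$$
Thus it suffices to bound from below the dimension of the degree $r$ piece of the intersection of the two complete intersection ideals $\langle C_1,F_1\rangle$ and $\langle C_2,F_2\rangle$.

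Next, setting $U=\langle C_1,F_1\rangle_r$ and $V=\langle C_2,F_2\rangle_r$, viewed as linear subspaces of the finite dimensional vector space $R_r$, I would invoke the elementary inequality
$$\dim(U\cap V)\geq \dim U+\dim V-\dim R_r,$$
which is valid because $U+V=\langle C_1,F_1,C_2,F_2\rangle_r$ sits inside $R_r$, so that $\dim(U+V)\leq\dim R_r=\binom{r+2}{2}$. This reduces the entire problem to computing $\dim U$ and $\dim V$.

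These two dimensions come straight from the complete intersection resolutions recorded in Note~\ref{note:note}. By the symmetry $x\leftrightarrow z$ one has $\dim U=\dim V=\binom{r+2}{2}-HF(R/\langle C_1,F_1\rangle,r)$, and the Hilbert series of a complete intersection in $R$ cut out by two forms of degrees $(n,n)$ (when $r=2n-1$) or $(n,n+1)$ (when $r=2n$) yields $HF(R/\langle C_1,F_1\rangle,r)=n^2$, respectively $n(n+1)$ -- precisely the values already appearing in the reduction preceding Theorem~\ref{thm:main}. Substituting gives
$$\dim K(r)\geq \binom{r+2}{2}-2\,HF(R/\langle C_1,F_1\rangle,r),$$
which evaluates to $(2n^2+n)-2n^2=n$ when $r=2n-1$, and to $(2n^2+3n+1)-2n(n+1)=n+1$ when $r=2n$, exactly as claimed.

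Finally I would stress where the real difficulty of the overall problem resides. This lower bound is the easy half: it uses nothing beyond the trivial fact that the sum ideal occupies at most all of $R_r$, together with the standard complete intersection Hilbert function. The genuinely hard inequality is the matching upper bound $\dim K(2n-1)\leq n$ and $\dim K(2n)\leq n+1$, which is equivalent to the statement that $\dim(U+V)=\dim R_r$, that is, that $\langle C_1,F_1,C_2,F_2\rangle$ fills all of $R_r$ in degree $r$. This is exactly the point at which the subtle interaction between the two complete intersections -- and the representation theory and matrix theory mentioned in the introduction -- must enter, and it is what the subsequent lemmas are designed to control.
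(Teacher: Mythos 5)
Your proposal is correct and is essentially the paper's own argument: your linear-algebra inequality $\dim(U\cap V)\geq \dim U+\dim V-\dim R_r$ is exactly the degree-$r$ slice of the short exact sequence $0\to R/(I_1\cap I_2)\to R/I_1\oplus R/I_2\to R/(I_1+I_2)\to 0$ combined with $HF(R/\langle C_1,F_1,C_2,F_2\rangle,r)\geq 0$, which is precisely how the paper proves the lemma. Your explicit verification of the colon identity $(\mathfrak a\cap\mathfrak b):\mathfrak s=(\mathfrak a:\mathfrak s)\cap(\mathfrak b:\mathfrak s)$ and of the complete-intersection values $HF(R/\langle C_1,F_1\rangle,r)=n^2$ (for $r=2n-1$) and $n(n+1)$ (for $r=2n$) simply spells out steps the paper carries out in the reduction preceding Theorem~\ref{thm:main} and in Note~\ref{note:note}.
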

\begin{proof} Since $HF(\frac{R}{\langle C_1,F_1,C_2,F_2\rangle},r)\geq 0$, then from the exact sequence above we have $HF(R,r)-\dim K(r)\leq 2HF(\frac{R}{\langle C_1,F_1\rangle},r)$ and hence the result.
\end{proof}

We have $$\langle x^{r+1},(x+y)^{r+1}\rangle :y^{r+1}=\langle C_1,F_1\rangle$$ and $$\langle z^{r+1},(z+y)^{r+1}\rangle :y^{r+1}=\langle C_2,F_2\rangle.$$

Observe that $C_1,F_1$ are polynomials in variables $x$ and $y$ so they are elements in $A=\mathbb R[x,y]$. $A$ is a subring of $R=\mathbb R[x,y,z]$. We will denote by $\langle C_1,F_1\rangle A$ the ideal in $A$ generated by $C_1,F_1$, and we will denote by $\langle C_1,F_1\rangle R$ the ideal in $R$ generated also by $C_1,F_1$. We also have $\langle C_1,F_1\rangle R\cap A=\langle C_1,F_1\rangle A$.

Similarly, $C_2,F_2$ are polynomials in variables $y$ and $z$, and so they belong to $B=\mathbb R[y,z]\subset R$. Again, $\langle C_2,F_2\rangle B$ will denote the ideal in $B$ generated by $C_2,F_2$, and  $\langle C_2,F_2\rangle R$ will denote the ideal in $R$ generated by $C_2,F_2$.

\begin{lem}\label{lem:important1} Let $F\in K(r)$. Then
\begin{enumerate}
\item If we write $F=z^rf_0+z^{r-1}f_1+\cdots+ zf_{r-1}+f_r$ with $f_i\in A=\mathbb R[x,y]$, then $f_i\in \langle C_1,F_1\rangle A$.
\item If we write $F=x^rg_0+x^{r-1}g_1+\cdots+ xg_{r-1}+g_r$ with $g_j\in B=\mathbb R[y,z]$, then $g_j\in \langle C_2,F_2\rangle B$.
\end{enumerate}
\end{lem}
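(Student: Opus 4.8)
The plan is to reduce the two assertions to ordinary ideal membership in $R$ and then extract the claim by collecting coefficients with respect to a single variable. First I would record that the colon operation distributes over intersection, so that
$$K(r) = \left(\left(\langle x^{r+1},(x+y)^{r+1}\rangle : y^{r+1}\right) \cap \left(\langle z^{r+1},(z+y)^{r+1}\rangle : y^{r+1}\right)\right)_r = \left(\langle C_1,F_1\rangle R \cap \langle C_2,F_2\rangle R\right)_r,$$
using the two colon identities recorded just before this lemma. In particular, any $F \in K(r)$ lies in both $\langle C_1,F_1\rangle R$ and $\langle C_2,F_2\rangle R$; these are the only two facts about $F$ I will use, one for each part.

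For part (1), the key observation is that $C_1$ and $F_1$ involve only $x$ and $y$, hence are constant in $z$ when we regard $R = A[z]$. Write $F = C_1 P + F_1 Q$ for some $P,Q \in R$ (this is membership in $\langle C_1,F_1\rangle R$), and expand $P = \sum_k z^k p_k$, $Q = \sum_k z^k q_k$ with $p_k,q_k \in A$. Since $C_1,F_1$ carry no $z$, this gives
$$F = \sum_k z^k (C_1 p_k + F_1 q_k).$$
Comparing with the given expansion $F = \sum_i z^{r-i} f_i$ and matching the coefficient of each power of $z$ (these coefficients are uniquely determined in the polynomial ring $A[z]=R$) yields $f_i = C_1 p_{r-i} + F_1 q_{r-i}$, which is manifestly an element of $\langle C_1,F_1\rangle A$ because $p_{r-i},q_{r-i}\in A$. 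Homogeneity of $F$ forces each $f_i$ (and each $p_k,q_k$) to be homogeneous of the appropriate degree, but this is automatic and plays no role in the argument.

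Part (2) is the mirror image: $C_2$ and $F_2$ involve only $y$ and $z$, hence are constant in $x$ once we view $R = B[x]$. Starting from $F = C_2 P' + F_2 Q'$ with $P',Q'\in R$, expanding $P',Q'$ in powers of $x$ with coefficients in $B$, and matching the coefficient of $x^{r-j}$ against $F = \sum_j x^{r-j} g_j$ gives $g_j \in \langle C_2,F_2\rangle B$ by the identical bookkeeping.

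There is no deep obstacle here; the entire content lies in choosing the right viewpoint, namely treating $R$ as a polynomial ring over $A$ (resp. $B$) in the single variable $z$ (resp. $x$) that is absent from the generators $C_1,F_1$ (resp. $C_2,F_2$). The one point that deserves care is the justification that $F\in\langle C_1,F_1\rangle R$ and $F\in\langle C_2,F_2\rangle R$, i.e. the colon-distributes-over-intersection reduction together with the two colon computations quoted above; once those are in hand the coefficient comparison is purely formal, and the identity $\langle C_1,F_1\rangle R\cap A=\langle C_1,F_1\rangle A$ recorded earlier serves as a consistency check that the extracted coefficients indeed land in the smaller ideals as claimed.
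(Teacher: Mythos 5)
Your proof is correct, but it is not the paper's argument; it takes a genuinely different and more elementary route. The paper makes the same initial reduction (its two displayed colon identities give $F\in\langle C_1,F_1\rangle R$ and $F\in\langle C_2,F_2\rangle R$; your explicit appeal to $(I\cap J):f=(I:f)\cap(J:f)$ merely makes that step visible), but it then extracts the $f_i$ by an inductive peeling argument: substitute $z=0$ to conclude $f_r\in\langle C_1,F_1\rangle A$, subtract, and divide by $z$ using the fact that $\{z,C_1,F_1\}$ is an $R$-regular sequence, which in turn rests on $\langle C_1,F_1\rangle$ being a complete intersection (Note~\ref{note:note}). You instead observe that $R=A[z]$ is free as an $A$-module and that $C_1,F_1\in A$, so that $\langle C_1,F_1\rangle R=\bigl(\langle C_1,F_1\rangle A\bigr)[z]$, and membership of every $z$-coefficient follows at once from uniqueness of coefficients in $A[z]$. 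Your version buys simplicity and generality: it needs no regular-sequence or complete-intersection input at all (it works verbatim for an arbitrary ideal of $A$ extended to $R$), and the identity $\langle C_1,F_1\rangle R\cap A=\langle C_1,F_1\rangle A$, which the paper records as a separate standing fact, drops out as a byproduct rather than serving only as the consistency check you describe. What the paper's route buys is a mechanism it reuses: the same substitute-and-divide-by-a-regular-element maneuver reappears in the proofs of Proposition~\ref{prop:mindegree} and Theorem~\ref{thm:odd}, so establishing it here amortizes the cost, whereas your coefficientwise bookkeeping would have to be restated in those places (though it would succeed there too, since the relevant generators are again free of the variable being divided out).
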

\begin{proof} It is enough to prove the first part, since we can switch the roles of $x$ and $z$ afterwards. Let $$F=z^rf_0+z^{r-1}f_1+\cdots+ zf_{r-1}+f_r\in K(r).$$

Since $F\in \langle C_1,F_1\rangle R$, substituting $z=0$ we obtain $f_r\in\langle C_1,F_1\rangle A$. Therefore $$z(z^{r-1}f_0+\cdots+f_{r-1})\in \langle C_1,F_1\rangle R.$$ Since $\langle C_1,F_1\rangle A\subset A$ is a complete intersection, then $\{z,C_1,F_1\}$ is an $R-$regular sequence, and therefore  $z^{r-1}f_0+\cdots+f_{r-1}\in \langle C_1,F_1\rangle R$. Again, by making $z=0$, and with the same argument as above, we obtain inductively that for all $i=0,\ldots,r$, one has $f_i\in\langle C_1,F_1\rangle A$. \end{proof}

A useful consequence of the proof of the above lemma is that if $F\in K(r)$ and if $x^ay^bz^c$ is a nonzero monomial in the expression of $F$, since the ideals $\langle C_i,F_i\rangle, i=1,2$ are generated in degree $\geq n$, then $a\leq r-n$ and $c\leq r-n$.

The key result is the following lemma. For convenience we assume that $0$ has any degree. In this section most of the arguments use induction on $r$ (the base cases are simple calculations, and we left them out of the notes), and the third part of this next result will help resolve the inductive step.

\begin{lem}\label{lem:important} The following statements are true:
\begin{enumerate}
  \item $y^r\in K(r)$.
  \item For any $j\leq r-1$, $y^j\notin \langle x^{r+1},(x+y)^{r+1}\rangle:y^{r+1}$ and $y^j\notin \langle z^{r+1},(z+y)^{r+1}\rangle:y^{r+1}$.
  \item If $F\in K(r)$, then $y\frac{\partial^2F}{\partial x\partial z}=yF_{xz}\in K(r-1)$.
\end{enumerate}
\end{lem}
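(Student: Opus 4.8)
The plan is to handle parts (1) and (2) together by a linear change of coordinates, and to handle part (3), the inductive engine, by exhibiting two differential operators that move between the relevant ideals. Throughout I will use that $R=A[z]$ is free over $A=\mathbb{R}[x,y]$, so membership of a polynomial in $y$ alone in $\langle x^{r+1},(x+y)^{r+1}\rangle R$ is equivalent to its membership in $\langle x^{r+1},(x+y)^{r+1}\rangle A$; this lets me compute inside two variables.

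For (1) and (2) I would focus on the single ideal $\langle x^{r+1},(x+y)^{r+1}\rangle A$, the $z$-ideal being identical after interchanging $x$ and $z$. Setting $u=x$ and $v=x+y$ (so $y=v-u$) transforms it into the monomial complete intersection $\langle u^{r+1},v^{r+1}\rangle$ in $\mathbb{R}[u,v]$, whose quotient has the monomial $\mathbb{R}$-basis $\{u^iv^j : 0\le i,j\le r\}$. A power $y^m=(v-u)^m$ then reduces, modulo the ideal, to $\sum_k \binom{m}{k}(-1)^k u^k v^{m-k}$ restricted to the surviving exponents $\max(0,m-r)\le k\le \min(m,r)$, and the entire argument comes down to the elementary observation that this index range is nonempty precisely when $m\le 2r$ and empty when $m=2r+1$. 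From this I read off both statements: for (1), taking $m=2r+1$ gives $y^{2r+1}\in\langle x^{r+1},(x+y)^{r+1}\rangle$ and, symmetrically, $y^{2r+1}\in\langle z^{r+1},(z+y)^{r+1}\rangle$, so $y^r\cdot y^{r+1}$ lies in the intersection and $y^r\in K(r)$; for (2), taking $m=j+r+1$ with $j\le r-1$ gives $m\le 2r$, so $y^m\notin\langle x^{r+1},(x+y)^{r+1}\rangle$, which is exactly $y^j\notin \langle x^{r+1},(x+y)^{r+1}\rangle:y^{r+1}$, and likewise for the $z$-ideal.

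For part (3) the idea is to differentiate. Writing $\mathfrak{a}=\langle z^{r+1},(z+y)^{r+1}\rangle R$, I would record two closure properties: since the generators of $\mathfrak{a}$ are free of $x$, the operator $\partial/\partial x$ maps $\mathfrak{a}$ into itself; and since $\partial(z^{r+1})/\partial z=(r+1)z^r$ and $\partial((z+y)^{r+1})/\partial z=(r+1)(z+y)^r$, the operator $\partial/\partial z$ maps $\mathfrak{a}$ into the dropped-degree ideal $\langle z^r,(z+y)^r\rangle R$ (each resulting term still carries a factor $z^r$ or $(z+y)^r$). Starting from $y^{r+1}F\in\mathfrak{a}$ and applying $\partial/\partial x$ and then $\partial/\partial z$, both of which commute with multiplication by the $x,z$-constant $y^{r+1}$, yields $y^{r+1}F_{xz}\in\langle z^r,(z+y)^r\rangle R$. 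The symmetric computation, applying $\partial/\partial z$ and then $\partial/\partial x$ to $y^{r+1}F\in\langle x^{r+1},(x+y)^{r+1}\rangle R$, gives $y^{r+1}F_{xz}\in\langle x^r,(x+y)^r\rangle R$. Hence $y^{r+1}F_{xz}=y^r\cdot(yF_{xz})$ lies in $\langle x^r,(x+y)^r\rangle\cap\langle z^r,(z+y)^r\rangle$, and since $yF_{xz}$ is homogeneous of degree $r-1$, this says precisely that $yF_{xz}\in K(r-1)$.

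I expect the only real subtleties to be bookkeeping rather than conceptual: verifying that term-by-term differentiation genuinely preserves ideal membership in the three-variable ring $R$, and keeping the degree count straight so that $yF_{xz}$ lands in the correct graded piece (with the convention that $0$ has any degree covering the case $F_{xz}=0$). Since we work over $\mathbb{R}$, the factors $(r+1)$ produced by differentiation are harmless, so no characteristic issues arise.
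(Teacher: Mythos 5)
Your proof is correct, and it splits naturally into a part that matches the paper and a part that does not. For (3) you essentially reproduce the paper's argument: the paper differentiates the explicit relation $y^{r+1}F=Px^{r+1}+Q(x+y)^{r+1}$ first in $z$ and then in $x$ to conclude $y^r(yF_{xz})\in\langle x^r,(x+y)^r\rangle$, and symmetrically for the $z$-side; your formulation via the closure properties $\partial_x(\mathfrak{a})\subseteq\mathfrak{a}$ and $\partial_z(\mathfrak{a})\subseteq\langle z^r,(z+y)^r\rangle$ is the same computation stated at the level of ideals, with the same degree bookkeeping and the same convention for $F_{xz}=0$. For (1) and (2), however, you take a genuinely different and more elementary route. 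The paper proves (1) by computing $HF(A/\langle C_1,F_1\rangle,r)=0$ from the graded free resolution of $A/\langle C_1,F_1\rangle$, which depends on the degree data $\deg C_1+\deg F_1=r+1$ imported from Schenck--Stillman via Note~1.1, and it proves (2) by differentiating a putative relation $y^{j+r+1}=Px^{r+1}+Q(x+y)^{r+1}$ in $x$ and showing the resulting coefficient recursion $(r+1+i)a_i=0$ forces $P=0$. Your substitution $u=x$, $v=x+y$ collapses both arguments into a single monomial computation: in $\mathbb{R}[u,v]/\langle u^{r+1},v^{r+1}\rangle$, with monomial basis $\{u^iv^j:0\le i,j\le r\}$, the class of $(v-u)^m$ is a combination of distinct basis monomials with nonzero coefficients $\pm\binom{m}{k}$ over $\max(0,m-r)\le k\le\min(m,r)$, hence vanishes exactly when $m\ge 2r+1$; taking $m=2r+1$ gives (1) and taking $m=j+r+1\le 2r$ gives (2), so both statements emerge as the two sides of one dichotomy (in effect you verify directly that the socle of the monomial complete intersection sits in degree $2r$ and that $y=v-u$ has nonvanishing powers below it). What this buys you is independence from the resolution of $\langle C_1,F_1\rangle$ and from Note~1.1 for this lemma, at the cost of nothing: your supporting reductions, namely $IR\cap A=I$ for $R=A[z]$ free over $A$ and the transfer of membership under the linear change of coordinates, are sound, and the first is a fact the paper itself records in the form $\langle C_1,F_1\rangle R\cap A=\langle C_1,F_1\rangle A$.
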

\begin{proof} Let $I=\langle x^{r+1},(x+y)^{r+1}\rangle:y^{r+1}=\langle C_1,F_1\rangle\subset A=\mathbb R[x,y]$.
\vskip .2in
(1). From the exact sequence $$0\rightarrow A(-(\deg(C_1)+\deg(F_1))\rightarrow A(-\deg(C_1))\oplus A(-\deg(F_1))\rightarrow A\rightarrow A/\langle C_1,F_1\rangle\rightarrow 0,$$ we have $$HF(A/I,r)=r+1-(r+1-\deg(C_1) + r+1-\deg(F_1))+(r+1-\deg(C_1)-\deg(F_1))=0,$$ so $y^r\in I$. Similarly, replacing $x$ with $z$, we have $y^r\in K(r)$.
\vskip .2in
(2). Let $j\leq r-1$. If $y^j\in \langle x^{r+1},(x+y)^{r+1}\rangle:y^{r+1}$, then there exist $P,Q\in A$ of degree $j$ such that $$y^{j+r+1}=Px^{r+1}+Q(x+y)^{r+1}.$$ Taking the partial derivative with respect to $x$ we obtain $$0=((r+1)P+xP_x)x^r+((r+1)Q+(x+y)Q_x)(x+y)^r.$$ Therefore $(r+1)P+xP_x$, which has degree $j$, is a multiple of $(x+y)^r$. But $j\leq r-1$, and therefore $$(r+1)P+xP_x=0.$$ If $P=\sum a_ix^iy^{j-i}$, we have $$a_0=0\mbox{ and }(r+1+i)a_i=0, i\geq 1.$$ Hence $P=0$, and therefore we have a contradiction.
\vskip .2in
(3). Let $F\in K(r)$. So there exist $P,Q\in R$ such that $$y^{r+1}F=Px^{r+1}+Q(x+y)^{r+1}.$$ Taking the partial derivative with respect to $z$ we have $$y^{r+1}F_z=P_zx^{r+1}+Q_z(x+y)^{r+1}.$$ Taking the partial derivative with respect to $x$ we have $$y^r(yF_{xz})\in\langle x^r,(x+y)^r\rangle.$$ Similarly, we have $$y^r(yF_{xz})\in\langle z^r,(z+y)^r\rangle.$$ Since $\deg(yF_{xz})=1+r-2=r-1$, we indeed obtain that $yF_{xz}\in K(r-1)$.
\end{proof}

\vskip .2in

\begin{prop}\label{prop:symmetry} We have $$K(r)=W:=\{f\in (\langle x^{r+1},(x+y)^{r+1}\rangle:y^{r+1})_r|f(x,y,z)=f(z,y,x)\}.$$
\end{prop}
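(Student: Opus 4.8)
The plan is to interpret the claim through the involution that exchanges $x$ and $z$, to reduce it to the vanishing of an antisymmetric eigenspace, and then to kill that eigenspace by induction on $r$ using part (3) of Lemma~\ref{lem:important}. Let $\tau$ be the $\mathbb R$-algebra involution of $R$ fixing $y$ and exchanging $x$ and $z$, so $(\tau f)(x,y,z)=f(z,y,x)$. Since $\tau$ carries $\langle x^{r+1},(x+y)^{r+1}\rangle$ to $\langle z^{r+1},(z+y)^{r+1}\rangle$ and fixes $y^{r+1}$, and since colon distributes over intersection in the form $(\mathfrak a\cap\mathfrak b):\mathfrak c=(\mathfrak a:\mathfrak c)\cap(\mathfrak b:\mathfrak c)$, we may write $K(r)=(\langle C_1,F_1\rangle R\cap\langle C_2,F_2\rangle R)_r$, and $\tau$ interchanges the two ideals and hence preserves $K(r)$. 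As $\mathrm{char}\,\mathbb R=0$, I would split $K(r)=K(r)^+\oplus K(r)^-$ into $\tau$-symmetric and $\tau$-antisymmetric parts, and first record that $W=K(r)^+$: if $f\in W$ then $f$ is $\tau$-symmetric and lies in $\langle x^{r+1},(x+y)^{r+1}\rangle R:y^{r+1}=\langle C_1,F_1\rangle R$, so $f=\tau f$ lies also in $\langle C_2,F_2\rangle R$, giving $f\in K(r)$; conversely any $\tau$-symmetric element of $K(r)$ is visibly in $W$. Thus the Proposition is equivalent to the single assertion $K(r)^-=0$.

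Next I would prove $K(r)^-=0$ by induction on $r$, the base cases being the direct computations for small $r$. Take $f\in K(r)^-$. Because $\partial_x\partial_z$ is symmetric under interchanging $x$ and $z$, it commutes with $\tau$, so $(\tau f)_{xz}=\tau(f_{xz})$; since $\tau f=-f$ this shows that $f_{xz}$, and therefore $yf_{xz}$, is again $\tau$-antisymmetric. By part (3) of Lemma~\ref{lem:important}, $yf_{xz}\in K(r-1)$, hence $yf_{xz}\in K(r-1)^-$. The inductive hypothesis gives $K(r-1)^-=0$, so $yf_{xz}=0$ and, $R$ being a domain, $f_{xz}=0$.

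The third step extracts the shape of $f$ from $f_{xz}=0$ together with membership in the colon ideal. Since $f_{xz}=0$, no monomial of $f$ involves both $x$ and $z$, so $f=A(x,y)+B(y,z)$; imposing $\tau$-antisymmetry forces the pure-$y$ part to vanish and $B(y,z)=-A(z,y)$, i.e. $f=A(x,y)-A(z,y)$ with $A$ homogeneous of degree $r$. From $f\in\langle x^{r+1},(x+y)^{r+1}\rangle R:y^{r+1}$ we obtain $y^{r+1}A(x,y)-y^{r+1}A(z,y)\in\langle x^{r+1},(x+y)^{r+1}\rangle R$, and the first term always lies in this ideal because $(\langle x^{r+1},(x+y)^{r+1}\rangle:y^{r+1})_r$ is all of $A_r$ (the computation $HF(A/\langle C_1,F_1\rangle,r)=0$ from part (1) of Lemma~\ref{lem:important}). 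Hence $y^{r+1}A(z,y)\in\langle x^{r+1},(x+y)^{r+1}\rangle R$. Writing $A(z,y)=\sum_k a_kz^ky^{r-k}$ and viewing $R/\langle x^{r+1},(x+y)^{r+1}\rangle R$ as a polynomial ring in $z$ over $S=\mathbb R[x,y]/\langle x^{r+1},(x+y)^{r+1}\rangle$, each coefficient must satisfy $a_ky^{2r+1-k}\in\langle x^{r+1},(x+y)^{r+1}\rangle$. The substitution $u=x,\ v=x+y$ identifies this ideal with $\langle u^{r+1},v^{r+1}\rangle$, and since $y^m=(v-u)^m$ has a surviving basis monomial $u^iv^{m-i}$ with $i,m-i\le r$ exactly when $m\le 2r$, one gets the sharp threshold $y^m\in\langle x^{r+1},(x+y)^{r+1}\rangle\iff m\ge 2r+1$. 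Therefore $a_k=0$ for all $k\ge 1$, so $A(z,y)=a_0y^r=A(x,y)$ and $f=0$, completing the induction.

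I expect the main obstacle to be this last step rather than the involution bookkeeping. The inductive use of Lemma~\ref{lem:important}(3) cleanly removes the genuinely mixed $xz$-monomials and reduces $f$ to the separated form $A(x,y)-A(z,y)$, but one still has to determine precisely which polynomials in $y$ and $z$ alone can lie in $\langle x^{r+1},(x+y)^{r+1}\rangle R$; it is exactly the threshold $y^m\in\langle x^{r+1},(x+y)^{r+1}\rangle\iff m\ge 2r+1$ that forces the antisymmetric component to collapse. Everything else — the commutation of $\tau$ with $\partial_x\partial_z$, the eigenspace splitting, and the identification $W=K(r)^+$ — is formal.
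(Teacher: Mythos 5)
Your proof is correct and follows essentially the same route as the paper: the paper likewise reduces the proposition to showing that antisymmetric elements of $K(r)$ vanish (phrased there as symmetrizing a basis via $g_i=f_i+\bar f_i$ and proving linear independence, which is your eigenspace splitting $K(r)=K(r)^+\oplus K(r)^-$ in different clothing), and kills them by the very same induction, using Lemma~\ref{lem:important}(3) to get $yF_{xz}\in K(r-1)$, hence $F_{xz}=0$, and then annihilating the coefficients of the separated form $A(x,y)-A(z,y)$. The only difference is cosmetic: where the paper invokes Lemma~\ref{lem:important1} and Lemma~\ref{lem:important}(2) for that last step, you re-derive the equivalent threshold $y^m\in\langle x^{r+1},(x+y)^{r+1}\rangle\iff m\geq 2r+1$ directly and correctly via the substitution $u=x$, $v=x+y$.
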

\begin{proof} For convenience denote with $\bar{f}(x,y,z)=f(z,y,x)$.

The inclusion ``$\supseteq$'' is immediate, by switching $x$ and $z$, so $\dim K(r)\geq \dim W$.

For the other inclusion, consider $\{f_1,\ldots,f_m\}$ to be a basis for $K(r)$. Since $f_i\in K(r)$, then $\bar{f}_i\in K(r)$, and therefore $g_i:=f_i+\bar{f}_i\in K(r)$. Since $g_i=\bar{g}_i$, and since $$g_i\in K(r)\subset (\langle x^{r+1},(x+y)^{r+1}\rangle:y^{r+1})_r,$$ we have that $g_i\in W, i=1,\ldots,m$.

Suppose that these $g_i$ are linearly dependent. Then there exist $c_1,\ldots,c_m\in\mathbb R$ not all zero, such that $$c_1(f_1+\bar{f}_1)+\cdots+c_m(f_m+\bar{f}_m)=0.$$

We have $F:=c_1f_1+\cdots+c_mf_m\in K(r)$ and $F=-\bar{F}$. We show by induction on $r$ that in these conditions $F=0$. From Lemma \ref{lem:important} (3), we have $yF_{xz}\in K(r-1)$. Also $yF_{xz}=-y(\bar{F})_{xz}$, and by induction $$yF_{xz}=0,$$ which means $$F=P(x,y)+Q(y,z)=a_n(x^{r-n}-z^{r-n})y^n+a_{n+1}(x^{r-n-1}-z^{r-n-1})y^{n+1}+\cdots+a_{r-1}(x-z)y^{r-1}.$$

From Lemma \ref{lem:important1} we have that for $j=n,\ldots,r-1$, $$a_jy^j\in \langle z^{r+1},(z+y)^{r+1}\rangle:y^{r+1},$$ and therefore from Lemma \ref{lem:important} (2), $a_j=0$.

This means $F=0$, and since $f_1,\ldots,f_m$ are linearly independent, all the $c_i$ must vanish. But this implies that $\dim W\geq m=\dim K(r)$, and with the previous inclusion we obtain the desired equality. \end{proof}

\begin{prop}\label{prop:mindegree} The ideal $$\mathcal I(r):=(\langle x^{r+1},(x+y)^{r+1}\rangle:y^{r+1})\cap (\langle z^{r+1},(z+y)^{r+1}\rangle:y^{r+1})$$ is minimally generated in degree $r$.
\end{prop}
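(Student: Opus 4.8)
The statement I will establish is that the initial degree of $\mathcal I(r)$ is exactly $r$: it has no nonzero homogeneous element in degrees below $r$, while degree $r$ is nonzero, so every minimal generator has degree at least $r$ and at least one occurs in degree $r$. The lower bound on the generator degrees is the substance; the degree-$r$ part is nonempty because $\mathcal I(r)_r = K(r)$ (colon commutes with finite intersection) contains $y^r$ by \lemref{lem:important}(1). Thus it remains to prove
$$\mathcal I(r)_d = 0\ \ \text{for all}\ \ d < r.$$
My plan is to prove this by induction on $r$, the case $r = 0$ being vacuous. For the inductive step I fix a homogeneous $F \in \mathcal I(r)_d$ with $1 \le d \le r-1$; the case $d = 0$ is immediate since $\mathcal I(r)$ is a proper ideal.

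First I would lower the parameter using \lemref{lem:important}(3). Its proof never uses that $\deg F = r$: differentiating $y^{r+1}F = Px^{r+1}+Q(x+y)^{r+1}$ first in $z$ and then in $x$ (and symmetrically in the variables $z,z+y$) shows that for any homogeneous $F \in \mathcal I(r)$ one has $yF_{xz} \in \mathcal I(r-1)$, now of degree $d-1$. Since $d-1 < r-1$, the induction hypothesis forces $yF_{xz} = 0$, hence $F_{xz} = 0$. Therefore no monomial of $F$ involves both $x$ and $z$, so $F = P(x,y) + Q(y,z)$ with $P \in A$ and $Q \in B$.

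Second I would strip this decomposition down to a pure power of $y$ using \lemref{lem:important1} and \lemref{lem:important}(2). Expanding $F$ in powers of $z$ with coefficients in $A$, the coefficients coming from $Q$ are the scalars $\beta_c$ multiplying $y^{d-c}z^c$ for $c \ge 1$; by \lemref{lem:important1}(1) each lies in $\langle C_1,F_1\rangle A$, while by \lemref{lem:important}(2) a nonzero multiple of $y^{d-c}$ cannot, because $d-c \le r-1$. Hence every such $\beta_c$ vanishes and $F = P(x,y)+\beta_0 y^d \in A$. Expanding this $F \in A$ in powers of $x$ with coefficients $\alpha_a y^{d-a} \in B$ and applying \lemref{lem:important1}(2) together with the $z$-version of \lemref{lem:important}(2) kills every remaining coefficient (again $d-a \le r-1$), so $F = 0$ and the induction closes.

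I expect the only delicate point to be the first step: checking that the computation of \lemref{lem:important}(3) in fact produces the map $F \mapsto yF_{xz}$ from $\mathcal I(r)_d$ to $\mathcal I(r-1)_{d-1}$ for every $d < r$, not merely for $d = r$, and that the induction terminates because $r$ and $d$ descend in lockstep ($d < r \Rightarrow d-1 < r-1$). Everything after $F_{xz}=0$ is bookkeeping built from the colon-and-regular-sequence mechanism of \lemref{lem:important1} and the non-membership of low powers of $y$ from \lemref{lem:important}(2); in particular this argument needs no input beyond the three earlier lemmas, and \propref{prop:symmetry} is not required here.
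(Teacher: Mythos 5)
Your proof is correct and follows essentially the same route as the paper's: induction on $r$ using the differentiation map $F\mapsto yF_{xz}$ from \lemref{lem:important}(3) (observing, as the paper does implicitly, that its proof is degree-independent) to force $F_{xz}=0$, and then killing the coefficients of $F=P(x,y)+Q(y,z)$ via the \lemref{lem:important1} mechanism together with \lemref{lem:important}(2). The only difference is cosmetic: the paper proves just $\dim\mathcal I(r)_{r-1}=0$, with lower degrees following since multiplication by powers of $y$ is injective, whereas you carry all degrees $d<r$ through the induction directly.
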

\begin{proof} We show by induction on $r\geq 2$, that $\dim_{\mathbb R} \mathcal I(r)_{r-1}=0$.

Let $F\in \mathcal I(r)_{r-1}$. The same proof as for Lemma \ref{lem:important} (3), gives us that $$yF_{xz}\in \mathcal I(r-1)_{r-2}.$$ By induction this must vanish and therefore $$F_{xz}=0.$$

So $$F=a_{r-1}x^{r-1}+a_{r-2}x^{r-2}y+\cdots+a_1xy^{r-2}+a_0y^{r-1}+b_1zy^{r-2}+\cdots+b_{r-1}z^{r-1}\in \mathcal I(r).$$ A similar proof as for Lemma \ref{lem:important1} will yield $$a_{r-1},a_{r-2}y,\ldots,a_1y^{r-2}\in \langle z^{r+1},(z+y)^{r+1}\rangle:y^{r+1}.$$ These must vanish because of Lemma \ref{lem:important} (2). Similarly, $b_1=\cdots=b_{r-1}=0$. So $F=a_0y^{r-1}\in \langle x^{r+1},(x+y)^{r+1}\rangle:y^{r+1}$, and this must vanish as well, from the same lemma. \end{proof}

Now we can prove our desired equalities.

\begin{thm}\label{thm:odd} For all $n\geq 2$, $$\dim K(2n-1)=n.$$
\end{thm}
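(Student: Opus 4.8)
The plan is to take the lower bound as given (\lemref{lem:important0} already yields $\dim K(2n-1)\ge n$) and concentrate entirely on the matching upper bound $\dim K(2n-1)\le n$, which I would obtain by induction on $n$ driven by the differentiation operator of \lemref{lem:important}(3). Writing $r=2n-1$, set
$$\Phi\colon K(r)\longrightarrow K(r-1),\qquad \Phi(F)=yF_{xz}.$$
The naive estimate $\dim K(r)\le\dim\ker\Phi+\dim K(r-1)$ is by itself too weak: it would only give $\dim K(2n-1)\le n+1$. The real content is that at an \emph{odd} step the dimension must not grow, which forces $\Phi$ to be non-surjective. So the two quantities I must control are $\ker\Phi$ and the fact that $\operatorname{im}\Phi$ is a proper subspace.

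For the kernel, $\ker\Phi=\{F\in K(r):F_{xz}=0\}$ consists of forms with no monomial divisible by both $x$ and $z$. Writing $F=\sum_c z^c h_c(x,y)$ with each $h_c\in\langle C_1,F_1\rangle A$ (\lemref{lem:important1}(1)), every $h_c$ with $c\ge 1$ must be a pure power $\lambda_c y^{r-c}$; since $y^{r-c}\notin\langle x^{r+1},(x+y)^{r+1}\rangle:y^{r+1}$ for $c\ge 1$ by \lemref{lem:important}(2), all of these vanish and $F=h_0(x,y)$. Re-expanding $F=\sum_a x^a g_a(y,z)$ with $g_a\in\langle C_2,F_2\rangle B$ (\lemref{lem:important1}(2)) and running the identical argument in the variable $x$ kills every term except a multiple of $y^r$. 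As $y^r\in K(r)$ by \lemref{lem:important}(1), this gives $\ker\Phi=\langle y^r\rangle$; the computation is degree-independent, so the kernel of $\Phi$ is one-dimensional in all the cases I use.

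Now the induction. Assuming $\dim K(2n-3)=n-1$, the map $\Phi\colon K(2n-2)\to K(2n-3)$ has one-dimensional kernel, so $\dim K(2n-2)\le 1+(n-1)=n$, and with \lemref{lem:important0} this pins down $\dim K(2n-2)=n$ (the even step is easy precisely because the kernel bound is already sharp there). For the odd step I then argue that $\operatorname{im}\Phi$, being contained in $yR$, cannot exhaust $K(2n-2)$: by \propref{prop:mindegree} the ideal $\mathcal I(2n-2)$ is minimally generated in degree $2n-2$, hence $\mathcal I(2n-2)=R\cdot K(2n-2)$; on the other hand $x^{2n-1}z^{2n-1}$ lies in both colon ideals (being a multiple of $x^{2n-1}$ and of $z^{2n-1}$), so it lies in $\mathcal I(2n-2)$ and is visibly not divisible by $y$. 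Thus not every minimal generator can lie in $yR$, so $K(2n-2)\not\subseteq yR$; any $g\in K(2n-2)\setminus yR$ then satisfies $g\notin\operatorname{im}\Phi$. Therefore $\dim\operatorname{im}\Phi\le\dim K(2n-2)-1=n-1$ and $\dim K(2n-1)=\dim\ker\Phi+\dim\operatorname{im}\Phi\le 1+(n-1)=n$, which with \lemref{lem:important0} gives equality; the base case $n=2$ is a direct computation.

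The step I expect to be the crux is exactly the non-surjectivity of $\Phi$ in the odd case, i.e.\ the claim that the image misses an entire dimension of $K(2n-2)$. Since every element of the image is manifestly divisible by $y$, what is really required is a certificate that $K(2n-2)$ contains a form \emph{not} divisible by $y$; as the only $y$-free monomial available in such a form is $x^{n-1}z^{n-1}$, this is a genuinely delicate nonvanishing statement. I would resolve it not by writing the generators $C_1,C_2$ explicitly, but by leveraging \propref{prop:mindegree}: the high-degree element $x^{2n-1}z^{2n-1}$ is trivially $y$-free and trivially in both ideals, and minimal generation in degree $2n-2$ propagates this non-$y$-divisibility down to the generators, hence to $K(2n-2)$. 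This is precisely the mechanism separating the odd case (dimension constant) from the even case (dimension $+1$), and it is what makes \propref{prop:mindegree} indispensable to the argument.
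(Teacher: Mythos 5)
Your kernel computation ($\ker\Phi=\langle y^r\rangle$ via \lemref{lem:important1} and \lemref{lem:important}(2)) is correct, your even step is sound, and your overall scheme — interleaved induction driven by rank--nullity for $\Phi(F)=yF_{xz}$, with a non-surjectivity certificate at the odd step — is genuinely different from the paper, which instead bounds $\dim K(2n-1)$ by counting leading monomials in graded reverse lex, using \propref{prop:symmetry} to force any $xz$-divisible leading term to come from $F=xzG$ and then killing $G$ via the identity $2nG+yG_y=0$. But the step you yourself identify as the crux contains a genuine gap. You read \propref{prop:mindegree} as asserting $\mathcal I(2n-2)=R\cdot K(2n-2)$, i.e., that $\mathcal I(2n-2)$ has no minimal generators in degrees above $2n-2$. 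What the proof of \propref{prop:mindegree} actually establishes is only $\mathcal I(r)_{r-1}=0$ — no generators \emph{below} degree $r$ — and the paper's own applications of it use only this weak form. Worse, the strong reading you need is false in general: take $r=3$, so $\deg C_i=\deg F_i=2$ by Note \ref{note:note}, and $HF(R/\langle C_i,F_i\rangle,d)=4$ for $d\geq 2$; since $HF(R/\langle C_1,F_1,C_2,F_2\rangle,3)=0$ (and hence also in degree $4$), the Mayer--Vietoris sequence gives
$$\dim\mathcal I(3)_4=\binom{6}{2}-(4+4-0)=7,\qquad\text{while}\qquad \dim\bigl(R_1\cdot\mathcal I(3)_3\bigr)\leq 3\dim K(3)=6,$$
so $\mathcal I(3)$ has minimal generators in degree $4$. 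Thus "minimally generated in degree $r$" cannot be taken in your strong sense, and at even $r=2n-2$ (the case you need) nothing in the paper supplies it.

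Without generation in top degree, your propagation argument collapses: if $\mathcal I(2n-2)$ has higher-degree minimal generators, then $x^{2n-1}z^{2n-1}\in\mathcal I(2n-2)\setminus yR$ is perfectly compatible with every element of $K(2n-2)$ being divisible by $y$, since the $y$-free element could be written using those higher generators. What your proof actually requires is the direct nonvanishing statement $K(2n-2)\not\subseteq yR$, equivalently that some element of $K(2n-2)$ has nonzero coefficient on $x^{n-1}z^{n-1}$ (the unique admissible $y$-free monomial, as you note). This is likely true and fixable — for instance $C_1C_2\in K(2n-2)$ is such a certificate provided the coefficient of $x^{n-1}$ in $C_1$ (and of $z^{n-1}$ in $C_2$) is nonzero — but that nonvanishing of the top coefficient is itself a nontrivial claim needing its own proof, which you have not supplied. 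As it stands, the odd-step upper bound $\dim\operatorname{im}\Phi\leq n-1$, and hence the theorem, does not follow from the cited results.
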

\begin{proof} From Lemma \ref{lem:important0}, it will be enough to show that $\dim K(2n-1)\leq n$.

As we denoted before, let $$\mathcal I(2n-1)=(\langle x^{2n},(x+y)^{2n}\rangle:y^{2n})\cap (\langle z^{2n},(z+y)^{2n}\rangle:y^{2n}).$$ Then we need to show that $$HF(\mathcal I(2n-1),2n-1)\leq n.$$

Let $\succ$ be the Graded Reverse Lexicographic order on the monomials of $R=\mathbb R[x,y,z]$, with $x\succ y\succ z$. Let $$in_{\succ}(\mathcal I(2n-1))$$ be the initial ideal of $\mathcal I(2n-1)$ with respect to $\succ$.

From Proposition \ref{prop:mindegree}, $\mathcal I(2n-1)$ is minimally generated in degree $2n-1$. Also, since by Proposition \ref{prop:symmetry}, the polynomials of degree $2n-1$ in $\mathcal I(2n-1)$ are symmetric in $x$ and $z$, we have that $$\{y^{2n-1},xy^{2n-2},\ldots,x^{n-1}y^n, xzM_1,\ldots,xzM_p\}$$ includes the monomials that generate $in_{\succ}(\mathcal I(2n-1))$. Here we also used the remark after Lemma \ref{lem:important1} that the power of $x$ in a nonzero monomial of an element of degree $2n-1$ in $\mathcal I(2n-1)$ is $\leq r-n=2n-1-n=n-1$.

Next we show that if $xzM$ is the leading monomial of an element $F\in \mathcal I(2n-1)$, then $\deg(F)\geq 2n$. Let $F\in \mathcal I(2n-1)_{2n-1}=K(2n-1)$ be such that $in_{\succ}(F)=xzM$. By Proposition \ref{prop:symmetry}, $F$ is symmetric in $x$ and $z$, so $$F=xzG,$$ for some $G\in R$.


We have $$y^{2n}(xzG)=Ax^{2n}+B(x+y)^{2n}\mbox{ and } y^{2n}(xzG)=Cz^{2n}+D(z+y)^{2n}.$$ We have $\{z,x^{2n},(x+y)^{2n}\}$ and $\{x,z^{2n},(z+y)^{2n}\}$ are $R-$regular sequences so $$y^{2n}(xG)=A'x^{2n}+B'(x+y)^{2n}\mbox{ and } y^{2n}(zG)=C'z^{2n}+D'(z+y)^{2n}.$$ From this we have that $x|B'$ and $z|D'$, and therefore $$y^{2n}G=A'x^{2n-1}+B''(x+y)^{2n}\mbox{ and } y^{2n}G=C'z^{2n-1}+D''(z+y)^{2n}.$$

Taking the partial derivative with respect to $y$ of the two equations above, we obtain

$$y^{2n-1}(2nG+yG_y)\in\langle x^{2n-1},(x+y)^{2n-1}\rangle\mbox{ and } y^{2n-1}(2nG+yG_y)\in\langle z^{2n-1},(z+y)^{2n-1}\rangle.$$

We obtained that $2nG+yG_y$ is an element of degree $2n-3$ in $\mathcal I(2n-2)$, or is equal to zero. So from Proposition \ref{prop:mindegree}, $$2nG+yG_y=0.$$ If $G=\sum a_{i,j,k}x^iy^jz^k$, then $(2n+j)a_{i,j,k}=0$, and so $G=0$.

We obtained that the leading monomials of elements of degree $2n-1$ in $\mathcal I(2n-1)$, belong to the set $\{y^{2n-1},xy^{2n-2},\ldots,x^{n-1}y^n\}$, and therefore $$HF(\mathcal I(2n-1),2n-1)\leq n.$$ \end{proof}

\begin{thm}\label{thm:even} For all $n\geq 2$, $$\dim K(2n)=n+1.$$
\end{thm}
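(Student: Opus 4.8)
The plan is to mirror the argument of Theorem~\ref{thm:odd}, replacing $r=2n-1$ by $r=2n$ and the exponent $2n$ by $2n+1$ throughout. By Lemma~\ref{lem:important0} we already have $\dim K(2n)\geq n+1$, so it suffices to prove the upper bound $HF(\mathcal I(2n),2n)\leq n+1$, where $\mathcal I(2n)=(\langle x^{2n+1},(x+y)^{2n+1}\rangle:y^{2n+1})\cap(\langle z^{2n+1},(z+y)^{2n+1}\rangle:y^{2n+1})$. As before I would pass to the initial ideal for the Graded Reverse Lexicographic order with $x\succ y\succ z$ and bound the number of distinct leading monomials occurring among the degree-$2n$ elements of $\mathcal I(2n)$.

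First I would pin down the candidate leading monomials. By Proposition~\ref{prop:symmetry} every $F\in K(2n)$ is symmetric in $x$ and $z$, and by the remark following Lemma~\ref{lem:important1} every monomial of such an $F$ has $x$-exponent and $z$-exponent at most $r-n=n$. In the chosen order a monomial with $z$-exponent $0$ is larger than any monomial of the same degree with positive $z$-exponent; combined with the $x$--$z$ symmetry this forces the leading monomial of any nonzero $F$ that is \emph{not} divisible by $xz$ to lie in $\{y^{2n},xy^{2n-1},\ldots,x^{n}y^{n}\}$, a set of exactly $n+1$ monomials. Thus the whole theorem reduces to ruling out $xz$-divisible leading monomials in degree $2n$.

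This last step is the crux, and I would argue exactly as in Theorem~\ref{thm:odd}. Suppose $F\in K(2n)$ has $in_{\succ}(F)=xzM$. The dominance observation above forces the $z$-free (hence, by symmetry, the $x$-free) part of $F$ to vanish, so $F=xzG$ with $\deg G=2n-2$. Writing $y^{2n+1}(xzG)=Ax^{2n+1}+B(x+y)^{2n+1}$ and $y^{2n+1}(xzG)=Cz^{2n+1}+D(z+y)^{2n+1}$, I would use that $\{z,x^{2n+1},(x+y)^{2n+1}\}$ and $\{x,z^{2n+1},(z+y)^{2n+1}\}$ are regular sequences to cancel $z$ and $x$, then observe $x\mid B'$ and $z\mid D'$ and cancel once more, arriving at $y^{2n+1}G=A'x^{2n}+B''(x+y)^{2n+1}$ together with its $x\leftrightarrow z$ mirror. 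Differentiating in $y$ then yields $y^{2n}\bigl((2n+1)G+yG_y\bigr)\in\langle x^{2n},(x+y)^{2n}\rangle$ and the analogous membership in $\langle z^{2n},(z+y)^{2n}\rangle$, so $(2n+1)G+yG_y$ is an element of $\mathcal I(2n-1)$ in degree $(2n-1)-1$. By Proposition~\ref{prop:mindegree} this degree piece is zero, so $(2n+1)G+yG_y=0$; writing $G=\sum a_{i,j,k}x^iy^jz^k$ gives $(2n+1+j)a_{i,j,k}=0$, hence $G=0$, a contradiction. Therefore no degree-$2n$ element has an $xz$-divisible leading monomial, the leading monomials are confined to the $n+1$ listed above, and $\dim K(2n)\leq n+1$.

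The main obstacle is precisely the deduction $F=xzG$ together with the cancellation bookkeeping that follows: one must be sure that the order-theoretic dominance of $z$-free monomials really does kill the $P(x,y)+Q(y,z)$ part, and that each removal of a factor $x$ or $z$ is legitimate---this is where the complete-intersection/regular-sequence hypotheses (and hence Note~\ref{note:note}) do the real work. Everything else is the same differentiate-and-descend induction already used for the odd case, with the single parity change that we now land in $\mathcal I(2n-1)$ rather than $\mathcal I(2n-2)$; since Proposition~\ref{prop:mindegree} is proved for all $r\geq 2$, it covers both parities and no new vanishing statement is needed.
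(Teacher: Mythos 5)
Your proof is correct, but it takes a genuinely different route from the paper's. You re-run the \std-basis argument of Theorem~\ref{thm:odd} with the parity shifted: the exponent bound $a,c\leq r-n=n$ from the remark after Lemma~\ref{lem:important1} together with the $x$--$z$ symmetry of Proposition~\ref{prop:symmetry} confines the $z$-free leading monomials to the $(n+1)$-element set $\{y^{2n},xy^{2n-1},\ldots,x^ny^n\}$, and your exclusion of $xz$-divisible leading monomials goes through exactly as in the odd case --- the regular-sequence cancellations are legitimate since $\{z,x^{2n+1},(x+y)^{2n+1}\}$ and $\{x,z^{2n+1},(z+y)^{2n+1}\}$ are powers of independent linear forms, the differentiation step lands $(2n+1)G+yG_y$ in $\mathcal I(2n-1)_{2n-2}$, and Proposition~\ref{prop:mindegree} (proved for all $r\geq 2$, both parities, as you note) kills it, whence $(2n+1+j)a_{i,j,k}=0$ forces $G=0$. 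The paper instead deduces the even case \emph{from} the odd one: it picks a basis $\{y^{2n},H_1,\ldots,H_m\}$ of $K(2n)$, applies the map $H\mapsto yH_{xz}$ of Lemma~\ref{lem:important}~(3) into $K(2n-1)$, and uses $\dim K(2n-1)=n$ from Theorem~\ref{thm:odd} to extract a combination $H$ with $H_{xz}=0$; symmetry then writes $H=\sum a_j(x^{2n-j}+z^{2n-j})y^j+a_0y^{2n}$, and Lemmas~\ref{lem:important1} and \ref{lem:important}~(2) annihilate all $a_j$ with $j<2n$, contradicting linear independence. The trade-off: the paper's proof is shorter and avoids redoing the initial-ideal analysis, but it is logically dependent on Theorem~\ref{thm:odd}; yours is self-contained (it uses only Lemma~\ref{lem:important0}, Lemma~\ref{lem:important1} and its remark, and Propositions~\ref{prop:symmetry} and \ref{prop:mindegree}, never the odd-case dimension count) and makes visible that a single uniform leading-monomial argument handles both parities at once.
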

\begin{proof} From Lemma \ref{lem:important0}, it will be enough to show that $\dim K(2n)\leq n+1$.

Since $y^{2n}\in K(2n)$, from Lemma \ref{lem:important} (1), we can find a basis for $K(2n)$: $$\{y^{2n},H_1,\ldots,H_m\}.$$

Suppose $m\geq n+1$. From Lemma \ref{lem:important} (3), $y(H_1)_{xz},\ldots,y(H_m)_{xz}\in K(2n-1)$. From Theorem \ref{thm:odd}, we have $\dim K(2n-1)=n$ and therefore, these elements must be linearly dependent. So there exist constants $c_1,\ldots,c_m\in\mathbb R$, not all zero, such that $$c_1y(H_1)_{xz}+\cdots+c_my(H_m)_{xz}=0.$$ This implies that $$(c_1H_1+\cdots+c_mH_m)_{xz}=0.$$

So $H:=c_1H_1+\cdots+c_mH_m\in K(2n)$, which is symmetric in $x$ and $z$ from Proposition \ref{prop:symmetry}, is of the form $$H=a_{2n}(x^{2n}+z^{2n})+\cdots+a_2(x^2+z^2)y^{2n-2}+a_1(x+z)y^{2n-1}+a_0y^{2n}.$$

Using Lemma \ref{lem:important1} and Lemma \ref{lem:important} (2), we obtain $$a_1=\cdots=a_{2n}=0,$$ which leads to the linear dependency $$-a_0y^{2n}+c_1H_1+\cdots+c_mH_m=0.$$ This is a contradiction since $\{y^{2n},H_1,\ldots,H_m\}$ is a basis for $K(2n)$. We obtain that $m\leq n$ which proves the theorem: $$\dim K(2n)\leq n+1.$$ \end{proof}

\section{Connections with Schur functors and Roth's equation}

Our initial approach to prove Theorem \ref{thm:main} was more direct: consider an arbitrary element in $\langle C_1,F_1\rangle_r$ and we require it to belong to $\langle C_2,F_2\rangle_r$; this will lead to a comparison of polynomial coefficients.

\vskip .2in

Let $$F=\sum_{i+j+k=r}a_{i,j,k}x^iy^jz^k$$ be an element in $\langle C_1,F_1\rangle\cap\langle C_2,F_2\rangle\subset R=\mathbb R[x,y,z]$ of degree $r$.

We can write $$(*)\mbox{ }F=z^rf_0+z^{r-1}f_1+\cdots+ zf_{r-1}+f_r,$$ where for $k=0,\ldots,r,$ $$f_k=a_{0,k,r-k}y^k+a_{1,k-1,r-k}xy^{k-1}+\cdots+a_{k,0,r-k}x^k$$ is a homogeneous polynomial of degree $k$ in $A=\mathbb R[x,y]$.

From Lemma \ref{lem:important1}, we obtain that for all $k=0,\ldots,r$, $$f_k\in\langle C_1,F_1\rangle A.$$

Since $\langle C_1,F_1\rangle A$ is minimally generated in degree $\geq n$ (see Note \ref{note:note}), then we obtain $$f_0=\cdots=f_{n-1}=0.$$ We are going to use these equations later when we switch the roles of $x$ and $z$. For now let us consider $k=n,\ldots,r$. We have

$$f_k\in\langle C_1,F_1\rangle=\langle x^{r+1},(x+y)^{r+1}\rangle:y^{r+1},$$ and therefore $$(**)\mbox{ }y^{r+1}f_k=P_kx^{r+1}+Q_k(x+y)^{r+1},$$ where $P_k=\sum_{i=0}^kp_{i,k-i}x^iy^{k-i}$ and $Q_k=\sum_{i=0}^kq_{i,k-i}x^iy^{k-i}$ are some homogeneous polynomials of degree $k$ in $A$.

Writing $(x+y)^{r+1}=\sum_{i=0}^{r+1}{{r+1}\choose{i}}x^iy^{r+1-i}$ and identifying the coefficients of $x^uy^v$ in both the left and right-hand sides of $(**)$, for each $k=n,\ldots,r$, we obtain:

\vskip .1in

\noindent (1) For the monomial $x^uy^v$ with $0\leq u\leq k$ we can determine the coefficients of $f_k$ from the coefficients of $Q_k$, as follows:

$$\left(\begin{array}{c}a_{0,k,r-k}\\a_{1,k-1,r-k}\\\vdots\\a_{k,0,r-k}\end{array}\right)= \left(
\begin{array}{ccccc}
1&0&\cdots&0&0\\
{{r+1}\choose{1}}&1&\cdots&0&0\\
{{r+1}\choose{2}}&{{r+1}\choose{1}}&\cdots&0&0\\
\vdots&\vdots& &\vdots&0\\
{{r+1}\choose{k}}&{{r+1}\choose{k-1}}&\cdots&{{r+1}\choose{1}}&1
\end{array}
\right)\left(\begin{array}{c}q_{0,k}\\q_{1,k-1}\\\vdots\\q_{k,0}\end{array}\right).$$

\vskip .1in

\noindent (2) For the monomial $x^uy^v$ with $k+1\leq u\leq r$ we obtain the following conditions on the coefficients of $Q_k$:

$$\left(
\begin{array}{cccc}
{{r+1}\choose{k+1}}&{{r+1}\choose{k}}&\cdots&{{r+1}\choose{1}}\\
{{r+1}\choose{k+2}}&{{r+1}\choose{k+1}}&\cdots&{{r+1}\choose{2}}\\
\vdots&\vdots& &\vdots\\
{{r+1}\choose{r}}&{{r+1}\choose{r-1}}&\cdots&{{r+1}\choose{r-k}}
\end{array}
\right)\left(\begin{array}{c}q_{0,k}\\q_{1,k-1}\\\vdots\\q_{k,0}\end{array}\right)= \left(\begin{array}{c}0\\0\\\vdots\\0\end{array}\right).$$

\vskip .1in

\noindent (3) For the monomial $x^uy^v$ with $u\geq r+1$ the coefficient identifications will express the coefficients of $P_k$ in terms of the coefficients of $Q_k$: $$\left(\begin{array}{c}p_{0,k}\\p_{1,k-1}\\\vdots\\p_{k,0}\end{array}\right)= -\left(
\begin{array}{ccccc}
1&{{r+1}\choose{r}}&{{r+1}\choose{r-1}}&\cdots&{{r+1}\choose{r+1-k}}\\
0&1&{{r+1}\choose{r}}&\cdots&{{r+1}\choose{r+1-(k-1)}}\\
\vdots&\vdots&\vdots& &\vdots\\
0&0&0&\cdots&1
\end{array}
\right)\left(\begin{array}{c}q_{0,k}\\q_{1,k-1}\\\vdots\\q_{k,0}\end{array}\right).$$ These equations will not be useful for our computations. Basically they show how to create $P_k$ from $Q_k$ to have $(**)$ be valid.

\vskip .2in

Let us consider our initial polynomial $F$ expanded by the powers of $x$: $$(***)\mbox{ } F=x^rg_0+x^{r-1}g_1+\cdots+xg_{r-1}+g_r,$$ where $$g_i=a_{r-i,i,0}y^i+a_{r-i,i-1,1}y^{i-1}z+\cdots+a_{r-i,0,i}z^i\in B=\mathbb R[z,y]$$ are homogeneous of degree $i$. Using the fact that $F\in\langle C_2,F_2\rangle R$, Lemma \ref{lem:important1} yields $g_i\in\langle C_2,F_2\rangle B$. Since this ideal is minimally generated in degree $n$ (see Note \ref{note:note}) we have $g_0=\cdots=g_{n-1}=0$. Equivalently,
\begin{eqnarray}
a_{r,0,0}&=&0\nonumber\\
a_{r-1,1,0}&=&a_{r-1,0,1}=0\nonumber\\
&\vdots&\nonumber\\
a_{r-(n-1),n-1,0}&=&a_{r-(n-1),n-2,1}=\cdots=a_{r-(n-1),0,n-1}=0.\nonumber
\end{eqnarray}

These mean that for each $k=n,\ldots,r$, the last $n+k-r$ of the coefficients $a_{i,j,k}$ in (1) above must vanish. So for each $k=n,\ldots,r$, we obtain $n+k-r$ more linear relations among the parameters $q_{a,b}$, that combined with the relations already obtained in (2), yield that for each $k=n,\ldots,r$, the vector $\left(\begin{array}{c}q_{0,k}\\q_{1,k-1}\\\vdots\\q_{k,0}\end{array}\right)$ is in the kernel of a $n\times (k+1)$ matrix $\mathcal M(k)$. We will see this matrix in more detail later on when we study the cases $r=2n-1$ and $r=2n$.

What remains from (1) are the first $(k+1)-(n+k-r)=r-n+1$ of the coefficients $a_{i,j,k}$. So for all $k=n,\ldots,r$,

$$\left(\begin{array}{c}a_{0,k,r-k}\\a_{1,k-1,r-k}\\\vdots\\a_{r-n,k-(r-n),r-k}\end{array}\right)= \left(
\begin{array}{ccccc}
1&0&\cdots&0&0\\
{{r+1}\choose{1}}&1&\cdots&0&0\\
{{r+1}\choose{2}}&{{r+1}\choose{1}}&\cdots&0&0\\
\vdots&\vdots& &\vdots&0\\
{{r+1}\choose{r-n}}&{{r+1}\choose{r-n-1}}&\cdots&{{r+1}\choose{1}}&1
\end{array}
\right)\left(\begin{array}{c}q_{0,k}\\q_{1,k-1}\\\vdots\\q_{r-n,k-(r-n)}\end{array}\right).$$

Combining the above for each $k=n,\ldots,r$ we obtain

$$\left(
\begin{array}{ccc}
a_{0,n,r-n}&\cdots&a_{0,r,0}\\
a_{1,n-1,r-n}&\cdots&a_{1,r-1,0}\\
\vdots& &\vdots\\
a_{r-n,2n-r,r-n}&\cdots&a_{r-n,n,0}
\end{array}
\right)=\mathcal D\cdot \left(
\begin{array}{ccc}
q_{0,n}&\cdots&q_{0,r}\\
q_{1,n-1}&\cdots&q_{1,r-1}\\
\vdots& &\vdots\\
q_{r-n,2n-r}&\cdots&q_{r-n,n}
\end{array}
\right),$$ where $\mathcal D$ is the lower-triangular matrix\footnote{A square matrix is called {\em lower (upper) triangular} if the entries above (below) the main diagonal are all zero.} in the previous vector equation. Denote with $\mathcal A$ the matrix to the left of the above equation and with $\mathcal Q$ the matrix of $q_{a,b}$'s.

$$\mathcal A=\mathcal D\cdot\mathcal Q.$$

\vskip .2in

At this moment we interchange the roles of $x$ and $z$, and we start with $F$ expanded by the powers of $x$, as we've seen in $(***)$. At this point we mentioned that $g_i\in\langle C_2,F_2\rangle B$, and therefore for $i=n,\ldots,r,$ $$g_i\in\langle z^{r+1},(z+y)^{r+1}\rangle:y^{r+1}.$$ Similarly as before $$y^{r+1}g_i=R_ix^{r+1}+S_i(x+y)^{r+1},$$ where $R_i=\sum_{k=0}^ir_{i-k,k}y^{i-k}z^k$ and $S_i=\sum_{k=0}^is_{i-k,k}y^{i-k}z^k$ are some homogeneous polynomials of degree $i$ in $B$.

With the same arguments as before (here we use the fact that $f_0=\cdots=f_{n-1}=0$), we obtain that for $i=n,\ldots,r$, the vector $\left(\begin{array}{c}s_{i,0}\\s_{i-1,1}\\\vdots\\s_{0,i}\end{array}\right)$ is in the kernel of the same $n\times (i+1)$ matrix $\mathcal M(i)$. Furthermore, we have for all $i=n,\ldots,r$,

$$\left(\begin{array}{c}a_{r-i,i,0}\\a_{r-i,i-1,1}\\\vdots\\a_{r-i,i-(r-n),r-n}\end{array}\right)= \left(
\begin{array}{ccccc}
1&0&\cdots&0&0\\
{{r+1}\choose{1}}&1&\cdots&0&0\\
{{r+1}\choose{2}}&{{r+1}\choose{1}}&\cdots&0&0\\
\vdots&\vdots& &\vdots&0\\
{{r+1}\choose{r-n}}&{{r+1}\choose{r-n-1}}&\cdots&{{r+1}\choose{1}}&1
\end{array}
\right)\left(\begin{array}{c}s_{i,0}\\s_{i-1,1}\\\vdots\\s_{i-(r-n),r-n}\end{array}\right).$$

Combining these vector equations for all $i=n,\ldots,r$ we obtain

$$\left(
\begin{array}{ccc}
a_{r-n,n,0}&\cdots&a_{0,r,0}\\
a_{r-n,n-1,1}&\cdots&a_{0,r-1,1}\\
\vdots& &\vdots\\
a_{r-n,2n-r,r-n}&\cdots&a_{0,n,r-n}
\end{array}
\right)=\mathcal D\cdot \left(
\begin{array}{ccc}
s_{n,0}&\cdots&s_{r,0}\\
s_{n-1,1}&\cdots&s_{r-1,1}\\
\vdots& &\vdots\\
s_{2n-r,r-n}&\cdots&s_{n,r-n}
\end{array}
\right),$$ where $\mathcal D$ is the same invertible matrix as before. Denote with $\mathcal B$ the matrix to the left of this matrix equation and with $\mathcal S$ the matrix of $s_{u,v}$'s.

$$\mathcal B=\mathcal D\cdot\mathcal S.$$

Observe that the entries of $\mathcal B$ are the same as the entries of $\mathcal A$, but in different positions. For example, the last column of $\mathcal B$ is the first row of $\mathcal A$ written backwards. This pattern is true for all the columns of $\mathcal B$. In matrix form this relation can be expressed as $$\mathcal J\cdot\mathcal A^{T}\cdot\mathcal J=\mathcal B,$$ where $\mathcal J$ is the exchange matrix $\left(
\begin{array}{cccc}
0&\cdots&0&1\\
0&\cdots&1&0\\
\vdots& & &\vdots\\
1&\cdots&0&0
\end{array}
\right)$.

Everything put together gives $$\mathcal D\cdot\mathcal S=\mathcal J\cdot\mathcal Q^{T}\cdot\mathcal D^{T}\cdot\mathcal J.$$

\vskip .1in

To summarize, we parameterized the elements in $\langle C_1,F_1\rangle_r\cap\langle C_2,F_2\rangle_r$ by two sets of parameters $q_{a,b}$ and $s_{u,v}$, both in the kernel $K$ of the same matrix, and with $(r-n+1)^2$ relations among them given by the above matrix equation. As we will see in the next subsections, the real challenge is not to find $\dim K$ (we will use powerful results from representation theory to do this), but it is to answer the following question: given any two matrices $\mathcal S$ and $\mathcal Q$ satisfying the above matrix equation, can these two matrices be extended to two sets of parameters $q_{a,b}$ and $s_{u,v}$ that are in the kernel $K$?

\vskip .3in

To find the dimension of the kernel $K$ of the matrix with diagonal blocks $\mathcal M(n),\ldots,\mathcal M(r)$ one has to use relevant facts about Schur functors. We briefly recall some of these in the next subsection. We follow the nice exposition in \cite{e}, A.2.5, but we also refer the reader to \cite{m}, Chapter 1.

\subsection{Schur functors} Let $V$ be a vector space over a field of characteristic 0. Let $\dim V=t$. The finite dimensional representations of $SL(V)$ decompose into a direct sum of irreducible representations. These summands are called {\em Schur modules} and can be viewed as functors (called {\em Schur functors}). To a sequence of numbers $t>d_1\geq\cdots\geq d_s>0$ one can associate the Schur module $S^{\{d_1,\ldots,d_s\}}V$, which is a nontrivial, irreducible finite-dimensional representation of $SL(V)$.

Let $Y$ be a Young diagram containing $s$ rows of boxes, row $i$ having $d_i$ boxes. Label the boxes in matrix notation; thus row $i$ has the boxes labeled $i1,i2,\ldots,id_i$. Set $A_i=\{i1,i2,\ldots,id_i\}$ and $A=A_1\cup A_2\cup\cdots\cup A_s$.

Similarly, we label the columns of $Y$ as $B_1,\ldots,B_{d_1}$. For example $B_1=\{11,21,\ldots,s1\}$. Thus $A=B_1\cup\cdots\cup B_{d_1}$.

For each index $ij\in A$ we consider a copy $V_{ij}\cong V$, viewed as a representation of $G=SL(V)$. For each set $A_i$ (or $B_j$) as above, we can consider $S_{A_i}V,\Lambda^{A_i}V,T_{A_i}V$ symmetric, exterior or tensor products of copies of $V$ labeled by indices in $A_i$ (or $B_j$).

For the sake of simplicity, assume $C=\{1,2,\ldots,m\}$ and let us recall that we have an embedding $$\Lambda^cV\rightarrow T_cV,$$ given via $$v_1\wedge\cdots\wedge v_c\mapsto \sum_{\sigma\in S_c}sgn(\sigma)v_{\sigma(1)}\otimes\cdots\otimes v_{\sigma(c)},$$ where $S_c$ is the symmetric group on $c$ letters.

Thus considering the induced natural maps, let $\Phi$ be the composition $$\Lambda^{A_1}V\otimes\cdots\otimes\Lambda^{A_s}V\rightarrow T_{A_1}V\otimes\cdots\otimes T_{A_s}V\cong T_{B_1}V\otimes\cdots\otimes T_{B_{d_1}}V\rightarrow S_{B_1}V\otimes\cdots\otimes S_{B_{d_1}}V.$$

We set $S^{\{d_1,\ldots,d_s\}}$ to be the image of $\Phi$. The classical convention to create $S^{\{d_1,\ldots,d_s\}}V$ from the Young diagram $Y$ is to take antisymmetric products on columns and symmetric product on rows. Observe that in \cite{e}, the construction is reversed.

The action of $G=SL(V)$ on $V$ extends naturally to the action of $G$ on $T_cV$ for each $c\in\mathbb N$, by acting $\sigma\in G$ on each factor, and this action induces an action of $G$ on $S^{\{d_1,\ldots,d_s\}}$.

\begin{exm} $S^{\{2,1\}}V$. The Young tableau is

\begin{center}\epsfig{figure=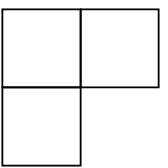,height=0.5in,width=0.5in}\end{center}

We have $A_1=\{11,12\},A_2=\{21\},B_1=\{11,21\},B_2=\{12\}$. So $A=A_1\cup A_2=B_1\cup B_2$.

$\Phi$ is the composition $$\Lambda^{A_1}V\otimes \Lambda^{A_2}V\stackrel{\alpha}\rightarrow T_{A_1}V\otimes T_{A_2}V \cong T_{B_1}V\otimes T_{B_2}V\stackrel{\beta}\rightarrow S_{B_1}V\otimes S_{B_2}V,$$ where $$\alpha((v_{11}\wedge v_{12})\otimes v_{21})=v_{11}\otimes v_{12}\otimes v_{21}-v_{12}\otimes v_{11}\otimes v_{21},$$ and therefore

$$\Phi(v_{11}\wedge v_{12}\otimes v_{21})= v_{11}\cdot v_{21}\otimes v_{12}-v_{12}\cdot v_{21}\otimes v_{11}.$$ $v_{11}\cdot v_{21}$ and $v_{12}\cdot v_{21}$ are products in $S_2V$. We have the fact that $Im(\Phi)=S^{\{2,1\}}V$.

Skipping details that can be found in \cite{fh}, Chapter 1, Lecture Six, one can show that $$\Lambda^2V\otimes V\simeq \Lambda^3V\oplus S^{\{2,1\}}V.$$ One idea of showing this is to embed everything in $T_3V$, and use the conventions and the approach in \cite{fh}. This is based on showing that $S^{\{2,1\}}V$ is the kernel of the map: $$\rho:\Lambda^2V\otimes V\rightarrow\Lambda^3V,$$ given by $\rho(v_1\wedge v_3\otimes v_2)=v_1\wedge v_2\wedge v_3$.

Hence we obtain $$\dim S^{\{2,1\}}V=\frac{(t-1)t(t+1)}{2}.$$
\end{exm}

In fact, in general the formula for $\dim S^{\{d_1,\ldots,d_s\}}V$ is the determinant of the matrix $M$, where $M$ is the $s\times s$ matrix with $$M_{i,j}={{t}\choose{d_j+i-j}}.$$ (Observe that in \cite{e}, Theorem A 2.9, there is a misprint in the entry $M_{s,s-1}$ where it should read ${{t}\choose{d_{s-1}+1}}$.)

\vskip .2in

\subsection{Finding $\dim K$} We have to study the two cases: $r=2n-1$ and $r=2n$.

\subsubsection{The case $r=2n-1$.} If $r=2n-1$, for $k=n,\ldots, 2n-1$ we have
$$\mathcal M(k)=\left(\begin{array}{cccccccc}
m_n&m_{n-1}&\cdots&m_1&m_0&0&\cdots&0\\
m_{n+1}&m_{n}&\cdots&m_2&m_1&m_0&\cdots&0\\
\vdots&\vdots& &\vdots&\vdots&\vdots& &\vdots\\
m_{2n-1}&m_{2n-2}&\cdots&m_n&m_{n-1}&m_{n-2}&\cdots&m_{2n-1-k}
\end{array}\right)$$ is an $n\times(k+1)$ matrix. We denoted $m_{\ell}={{2n}\choose{\ell}},\ell=0,\ldots,2n-1$.

We also have $$K=\oplus_{k=n}^{2n-1}Ker(\mathcal M(k)).$$

\begin{lem} We have $$\dim K=\frac{n(n+1)}{2}.$$
\end{lem}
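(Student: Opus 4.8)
The plan is to obtain $\dim K$ block by block, by computing the rank of each $\mathcal M(k)$. First I would record the Toeplitz shape visible in the displayed matrix: its $(i,j)$ entry is $\binom{2n}{n+i-j}$ for $1\le i\le n$ and $1\le j\le k+1$, where $m_\ell=\binom{2n}{\ell}$ and we set $\binom{2n}{\ell}=0$ for $\ell<0$ or $\ell>2n$. Since each $\mathcal M(k)$ is $n\times(k+1)$ with $k+1>n$, if $\mathcal M(k)$ has full row rank $n$ then $\dim\ker\mathcal M(k)=(k+1)-n$. Granting this for all $k=n,\ldots,2n-1$ and using $K=\bigoplus_{k=n}^{2n-1}\ker\mathcal M(k)$, the total dimension is
$$\sum_{k=n}^{2n-1}\big((k+1)-n\big)=\sum_{j=1}^{n}j=\frac{n(n+1)}{2},$$
which is precisely the assertion. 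Thus the whole content of the lemma is the full-row-rank statement for the blocks.

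To prove full row rank I would exhibit one nonsingular $n\times n$ submatrix of $\mathcal M(k)$. The natural candidate is the leftmost $n$ columns, and the key observation is that, because the entry formula $\binom{2n}{n+i-j}$ does not involve $k$, these first $n$ columns are the \emph{same} matrix for every $k\ge n$, namely the symmetric Toeplitz matrix
$$T=\left(\binom{2n}{\,n+i-j\,}\right)_{i,j=1}^{n}.$$
It is symmetric because $\binom{2n}{n+i-j}=\binom{2n}{n-(i-j)}$, and all of its entries are genuine positive binomial coefficients since $n+i-j\in[1,2n-1]$. Hence it suffices to prove, once and for all, the single statement $\det T\neq0$.

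The main obstacle is therefore this one determinantal nonvanishing. I would deduce it from positive definiteness of $T$. Using the integral representation $\binom{2n}{n+d}=\frac{1}{2\pi}\int_{-\pi}^{\pi}\big(2\cos(\theta/2)\big)^{2n}e^{-\mathrm{i}d\theta}\,d\theta$ with $d=p-q$ and substituting into the quadratic form, for any real vector $x=(x_1,\ldots,x_n)$ one gets
$$x^{\top}Tx=\frac{1}{2\pi}\int_{-\pi}^{\pi}\big(2\cos(\theta/2)\big)^{2n}\Big|\sum_{p=1}^{n}x_p\,e^{-\mathrm{i}p\theta}\Big|^{2}\,d\theta\ \ge\ 0,$$
and equality forces $\sum_{p}x_p e^{-\mathrm{i}p\theta}\equiv0$, hence $x=0$, because the weight $\big(2\cos(\theta/2)\big)^{2n}$ is strictly positive on $(-\pi,\pi)$. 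Therefore $\det T>0$. Equivalently one may invoke total positivity: the row $\big(\binom{2n}{\ell}\big)_\ell$ is a Pólya frequency sequence since $(1+t)^{2n}$ has only real roots, so every minor of the associated Toeplitz matrix is nonnegative and the central ones are strictly positive.

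I expect the only genuinely delicate bookkeeping to be the verification that the blocks have exactly this Toeplitz form and that their first $n$ columns coincide independently of $k$; once that is in place, the argument splits cleanly into the positivity input above and the elementary summation. The Schur-functor formalism recalled earlier offers an alternative to the positivity step — one can instead identify $\ker\mathcal M(k)$ representation-theoretically and read off $\dim K=\frac{n(n+1)}{2}$ from the determinantal dimension formula for Schur modules — but the direct rank computation is self-contained and avoids that machinery here.
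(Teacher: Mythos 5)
Your proof is correct, and its skeleton is the same as the paper's: both compute $\dim K=\sum_{k=n}^{2n-1}\bigl(k+1-\rank \mathcal M(k)\bigr)$ and reduce everything to the invertibility of the leftmost $n\times n$ block $\mathcal N=\bigl(\binom{2n}{n+i-j}\bigr)_{i,j=1}^n$, which (as you correctly verify via the Toeplitz entry formula $\mathcal M(k)_{ij}=m_{n+i-j}$, with $m_\ell=0$ for $\ell<0$) is independent of $k$. Where you genuinely diverge is the key nonvanishing step. The paper identifies $\det\mathcal N$, via the determinantal formula $M_{i,j}=\binom{t}{d_j+i-j}$ with $t=2n$ and $d_1=\cdots=d_n=n$, as the dimension of the rectangular Schur module $S^{\{n,\ldots,n\}}V$ for $\dim V=2n$, a nontrivial irreducible $SL(V)$-representation, so the determinant is a positive integer. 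You instead prove the stronger statement that $\mathcal N$ is positive definite, using the Fourier representation $\binom{2n}{n+d}=\frac{1}{2\pi}\int_{-\pi}^{\pi}\bigl(2\cos(\theta/2)\bigr)^{2n}e^{-\mathrm{i}d\theta}\,d\theta$ to exhibit $x^{\top}\mathcal N x$ as the integral of a strictly positive weight against $\bigl|\sum_p x_p e^{-\mathrm{i}p\theta}\bigr|^2$, with equality forcing the trigonometric polynomial, and hence $x$, to vanish. This buys an elementary, self-contained argument that bypasses representation theory entirely and yields strict positivity of $\det\mathcal N$ (indeed of all principal minors); what it gives up is the pleasant structural identification of the determinant as a representation dimension, which is the thematic point of the paper's Schur-functor section. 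Your secondary remark via total positivity --- $(1+t)^{2n}$ is real-rooted, so the binomial sequence is a P\'olya frequency sequence and all minors of the associated Toeplitz matrix are positive by Aissen--Schoenberg--Whitney --- is precisely the tool the paper itself deploys later in the proof of Proposition \ref{prop:propertiesU}, where a footnote observes it gives an alternative proof that $\det\mathcal N\neq 0$; your route in effect front-loads that observation to prove the present lemma.
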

\begin{proof} First we have $$\dim K=\sum_{k=n}^{2n-1}\dim Ker(\mathcal M(k))=\sum_{k=n}^{2n-1}(k+1-rank(\mathcal M(k))).$$

One should notice the $n\times n$ block in the left part of $\mathcal M(k)$:
$$\mathcal N=\left(\begin{array}{cccc}
{{2n}\choose{n}}&{{2n}\choose{n-1}}&\cdots&{{2n}\choose{1}}\\
{{2n}\choose{n+1}}&{{2n}\choose{n}}&\cdots&{{2n}\choose{2}}\\
\vdots&\vdots&\ddots&\vdots\\
{{2n}\choose{2n-1}}&{{2n}\choose{2n-2}}&\cdots&{{2n}\choose{n}}
\end{array}\right).$$

We have $m_{\ell}=m_{2n-\ell}$. From the above considerations, the determinant of this block is the dimension of the nontrivial, irreducible representation $S^{\{\lambda_1,\ldots,\lambda_n\}}V, \lambda_i=n,$ of $SL(V)$, where $V$ is a vector space of dimension $2n$.

With this fact, since these representations are nontrivial, these determinants are never zero, and therefore, for all $k=n,\ldots,2n-1$, $rank(\mathcal M(k))=n$. So $$\dim K=\sum_{k=1}^n k=\frac{n(n+1)}{2}.$$
\end{proof}

\vskip .1in

\subsubsection{The case $r=2n$.} If $r=2n$, for $k=n,\ldots, 2n$ we have
$$\mathcal M(k)=\left(\begin{array}{cccccccc}
m_{n+1}&m_n&\cdots&m_1&m_0&0&\cdots&0\\
m_{n+2}&m_{n+1}&\cdots&m_2&m_1&m_0&\cdots&0\\
\vdots&\vdots& &\vdots&\vdots&\vdots& &\vdots\\
m_{2n}&m_{2n-1}&\cdots&m_n&m_{n-1}&m_{n-2}&\cdots&m_{2n-k}
\end{array}\right),$$ is an $n\times(k+1)$ matrix. Now, $m_{\ell}={{2n+1}\choose{\ell}},\ell=0,\ldots,2n$.

We have $$K=\oplus_{k=n}^{2n}Ker(\mathcal M(k)).$$

\begin{lem} We have $$\dim K=\frac{(n+1)(n+2)}{2}.$$
\end{lem}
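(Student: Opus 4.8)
The plan is to follow verbatim the strategy used for the case $r=2n-1$. Exactly as there, I would first record
$$\dim K=\sum_{k=n}^{2n}\dim Ker(\mathcal M(k))=\sum_{k=n}^{2n}(k+1-rank(\mathcal M(k))),$$
so that the entire computation reduces to pinning down $rank(\mathcal M(k))$ for each $k=n,\ldots,2n$. Since $\mathcal M(k)$ has $n$ rows, its rank is at most $n$; the goal is therefore to prove that it has full row rank $n$ for every such $k$, and then to evaluate the resulting sum.

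The key step is to exhibit, inside each $\mathcal M(k)$, an $n\times n$ submatrix with nonzero determinant. Because $k\geq n$ guarantees that $\mathcal M(k)$ has at least $n$ columns, the natural candidate is the leftmost $n\times n$ block
$$\mathcal N'=\left(\begin{array}{cccc}
{{2n+1}\choose{n+1}}&{{2n+1}\choose{n}}&\cdots&{{2n+1}\choose{2}}\\
{{2n+1}\choose{n+2}}&{{2n+1}\choose{n+1}}&\cdots&{{2n+1}\choose{3}}\\
\vdots&\vdots&\ddots&\vdots\\
{{2n+1}\choose{2n}}&{{2n+1}\choose{2n-1}}&\cdots&{{2n+1}\choose{n+1}}
\end{array}\right),$$
whose $(i,j)$ entry is ${{2n+1}\choose{n+1+i-j}}$. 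Matching this against the determinantal formula $M_{i,j}={{t}\choose{d_j+i-j}}$ for $\dim S^{\{d_1,\ldots,d_s\}}V$, I would read off $t=2n+1$, $s=n$ and $d_1=\cdots=d_n=n+1$, so that $\det\mathcal N'=\dim S^{\{n+1,\ldots,n+1\}}V$ for a vector space $V$ of dimension $2n+1$. Since $2n+1>n+1>0$ for $n\geq 1$, the partition $(n+1,\ldots,n+1)$ satisfies the admissibility constraint $t>d_1\geq\cdots\geq d_s>0$, hence this Schur module is a nontrivial irreducible representation of $SL(V)$ and has strictly positive dimension. Therefore $\det\mathcal N'\neq 0$ and $rank(\mathcal M(k))=n$ for all $k=n,\ldots,2n$.

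Feeding this back into the sum gives $\dim K=\sum_{k=n}^{2n}(k+1-n)=1+2+\cdots+(n+1)=\frac{(n+1)(n+2)}{2}$, as asserted. I expect no genuine obstacle: the argument runs completely parallel to the previous lemma, the only real content being the identification of $\mathcal N'$ with the correct Schur functor. The single point deserving care is the bookkeeping of the entry index $n+1+i-j$, which is what shifts the common row length from $n$ (in the $r=2n-1$ case, where the binomials are ${{2n}\choose{\cdot}}$) up to $n+1$ here (where they are ${{2n+1}\choose{\cdot}}$); getting this offset right is exactly what makes the associated partition $(n+1,\ldots,n+1)$ rather than $(n,\ldots,n)$, and hence still admissible.
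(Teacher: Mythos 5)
Your proposal is correct and takes essentially the same approach as the paper: the paper likewise reduces to showing $rank(\mathcal M(k))=n$ for $k=n,\ldots,2n$ by identifying the leftmost $n\times n$ block, with $(i,j)$ entry $\binom{2n+1}{n+1+i-j}$, as having determinant equal to $\dim S^{\{n+1,\ldots,n+1\}}V$ for $\dim V=2n+1$ (a nontrivial irreducible $SL(V)$-representation, hence of positive dimension), and then evaluates the same sum $\sum_{k=n}^{2n}(k+1-n)=\frac{(n+1)(n+2)}{2}$. Your care with the index offset producing the partition $(n+1,\ldots,n+1)$ rather than $(n,\ldots,n)$ is exactly the point where the paper's argument differs from the $r=2n-1$ case, and you handled it correctly.
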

\begin{proof} Consider the left-most $n\times n$ matrix block of the matrix $\mathcal M(k)$: $$\mathcal N=\left(\begin{array}{cccc}
{{2n+1}\choose{n+1}}&{{2n+1}\choose{n}}&\cdots&{{2n+1}\choose{2}}\\
{{2n+1}\choose{n+2}}&{{2n+1}\choose{n+1}}&\cdots&{{2n+1}\choose{3}}\\
\vdots&\vdots&\ddots&\vdots\\
{{2n+1}\choose{2n}}&{{2n+1}\choose{2n-1}}&\cdots&{{2n+1}\choose{n+1}}
\end{array}\right).$$

As in the previous case, the determinant of this block is the dimension of the nontrivial representation $S^{\{\lambda_1,\ldots,\lambda_n\}}V, \lambda_i=n+1,$ of $SL(V)$, where $V$ is a vector space of dimension $2n+1$. Therefore, for all $k=n,\ldots,2n$, $rank(\mathcal M(k))=n$.

We have $$\dim K=\sum_{k=n}^{2n}\dim Ker(\mathcal M(k))=\sum_{k=n}^{2n}(k+1-rank(\mathcal M(k)))=\frac{(n+1)(n+2)}{2}.$$
\end{proof}

\vskip .1in

\subsection{Roth's equation and LU-decompositions} In this subsection we will discover how our problem relates to LU-decompositions of matrices and to solving certain types of matrix equations. Again, we divide the analysis into the two cases: $r=2n-1$ and $r=2n$, and we use the results in the previous two subsections: we have the $n\times n$ block denoted with $\mathcal N$ that is invertible.

\subsubsection{The case $r=2n-1$.} For each $k=n,\ldots,2n-1$ we have $$\mathcal M(k)\cdot \left(\begin{array}{c}q_{0,k}\\\vdots\\q_{n-1,k-n+1}\\ \hline q_{n,k-n}\\\vdots\\q_{k,0}\end{array}\right)=\left(\begin{array}{c}0\\\vdots\\0\end{array}\right).$$

Multiplying to the left the above equation by $\mathcal N^{-1}$, putting everything together, we obtain $$\mathcal Q = -\mathcal N^{-1}\cdot \left(\begin{array}{cccc}
m_0&0&\cdots&0\\
m_1&m_0&\cdots&0\\
\vdots&\vdots& &\vdots\\
m_{n-1}&m_{n-2}&\cdots&m_{0}
\end{array}\right)\cdot \left(\begin{array}{cccc}
q_{n,0}&q_{n,1}&\cdots&q_{n,n-1}\\
0&q_{n+1,0}&\cdots&q_{n+1,n-2}\\
\vdots&\vdots& &\vdots\\
0&0&\cdots&q_{2n-1,0}
\end{array}\right).$$

Observe that the lower triangular matrix above is exactly the matrix $\mathcal D$. Also denote with $\tilde{\mathcal Q}$, the upper triangular matrix we see above. The entries of this matrix consist of all the $q_{a,b}$ not occurring in $\mathcal Q$.

We obtain $$\mathcal Q = -\mathcal N^{-1}\mathcal D\tilde{\mathcal Q}.$$

We have the same result for the parameters $s_{u,v}$: $$\mathcal S = -\mathcal N^{-1}\mathcal D\tilde{\mathcal S}.$$

We had that $\mathcal D\mathcal S=\mathcal J\mathcal Q^{T}\mathcal D^{T}\mathcal J$. Denoting with $\mathcal E=\mathcal D\mathcal N^{-1}\mathcal D$, we have $$\mathcal E\tilde{\mathcal S}=\mathcal J(\tilde{\mathcal Q})^T\mathcal E^T\mathcal J,$$ and since $\mathcal J=\mathcal J^{-1}=\mathcal J^T$, denoting with $\mathcal U=\mathcal J\mathcal E$, we obtain $$\mathcal U\tilde{\mathcal S}=(\mathcal U\tilde{\mathcal Q})^T.$$

Our original problem translates into the following question regarding the solution of a certain type of matrix equation:

\begin{prop} We have $\dim K(2n-1)=n$ if and only if the matrix equation $$\mathcal UX-Y^T\mathcal U^T=C,$$ has a solution consisting of two upper-triangular matrices for any $n\times n$ matrix $C$.
\end{prop}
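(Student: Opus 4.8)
The plan is to read the asserted equivalence as an instance of rank-nullity applied to the linear map
$$\Psi\colon (X,Y)\longmapsto \mathcal{U}X-Y^{T}\mathcal{U}^{T},$$
where $(X,Y)$ ranges over pairs of upper-triangular $n\times n$ matrices and the value lies in the space of all $n\times n$ matrices. First I would observe that the right-hand statement, ``$\mathcal{U}X-Y^{T}\mathcal{U}^{T}=C$ has an upper-triangular solution for every $C$,'' is exactly the assertion that $\Psi$ is surjective. Since the domain has dimension $2\cdot\frac{n(n+1)}{2}=n(n+1)$ and the target has dimension $n^{2}$, rank-nullity gives $\dim\ker\Psi=n(n+1)-\dim(\operatorname{im}\Psi)$; hence $\Psi$ is surjective if and only if $\dim\ker\Psi=n(n+1)-n^{2}=n$.

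The heart of the matter is therefore to identify $\ker\Psi$ with $K(2n-1)$ as vector spaces. Here I would invoke the parameterization built above: to $F\in K(2n-1)$ one attaches the upper-triangular matrices $\tilde{\mathcal{Q}},\tilde{\mathcal{S}}$ formed from the free parameters $q_{a,b},s_{u,v}$, and we saw that they satisfy the homogeneous relation $\mathcal{U}\tilde{\mathcal{S}}=(\mathcal{U}\tilde{\mathcal{Q}})^{T}$, i.e. $(\tilde{\mathcal{S}},\tilde{\mathcal{Q}})\in\ker\Psi$. The assignment $F\mapsto(\tilde{\mathcal{S}},\tilde{\mathcal{Q}})$ is manifestly linear, and it is injective because $\tilde{\mathcal{Q}}$ recovers $\mathcal{Q}=-\mathcal{N}^{-1}\mathcal{D}\tilde{\mathcal{Q}}$, hence $\mathcal{A}=\mathcal{D}\mathcal{Q}$, hence every surviving coefficient of $F$.

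The main obstacle, and the point all the preceding work has been aimed at, is surjectivity of this correspondence: given upper-triangular $\tilde{\mathcal{Q}},\tilde{\mathcal{S}}$ with $\mathcal{U}\tilde{\mathcal{S}}=(\mathcal{U}\tilde{\mathcal{Q}})^{T}$, I must produce a genuine $F\in K(2n-1)$. I would reconstruct $F$ from $\tilde{\mathcal{Q}}$ as above; then $F\in\langle C_{1},F_{1}\rangle$ together with $g_{0}=\cdots=g_{n-1}=0$ is automatic, since these are precisely what lying in $\ker\mathcal{M}(k)$ encodes. The delicate step is $F\in\langle C_{2},F_{2}\rangle$, equivalently that the matrix $\mathcal{S}_{F}:=\mathcal{D}^{-1}\mathcal{B}$ read off from the actual coefficients of $F$ completes to kernel vectors, i.e. has the form $-\mathcal{N}^{-1}\mathcal{D}\tilde{\mathcal{S}}_{F}$ with $\tilde{\mathcal{S}}_{F}$ upper-triangular. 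I would settle this by unwinding the hypothesis: $\mathcal{U}\tilde{\mathcal{S}}=(\mathcal{U}\tilde{\mathcal{Q}})^{T}$ is equivalent to $\mathcal{D}\mathcal{S}=\mathcal{J}\mathcal{Q}^{T}\mathcal{D}^{T}\mathcal{J}$, whereas the general identity $\mathcal{B}=\mathcal{J}\mathcal{A}^{T}\mathcal{J}$ between the two arrangements of $F$'s coefficients gives $\mathcal{B}=\mathcal{J}(\mathcal{D}\mathcal{Q})^{T}\mathcal{J}=\mathcal{J}\mathcal{Q}^{T}\mathcal{D}^{T}\mathcal{J}$. Comparing, $\mathcal{D}\mathcal{S}_{F}=\mathcal{B}=\mathcal{D}\mathcal{S}$, so $\mathcal{S}_{F}=\mathcal{S}=-\mathcal{N}^{-1}\mathcal{D}\tilde{\mathcal{S}}$; thus $\tilde{\mathcal{S}}_{F}=\tilde{\mathcal{S}}$ is upper-triangular and $F\in\langle C_{2},F_{2}\rangle$. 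This is exactly the ``extension'' question flagged earlier, and the identity $\mathcal{B}=\mathcal{J}\mathcal{A}^{T}\mathcal{J}$ is what makes it go through.

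With the linear isomorphism $K(2n-1)\cong\ker\Psi$ established, I would close by combining it with the rank-nullity computation of the first paragraph: $\dim K(2n-1)=\dim\ker\Psi$, and $\Psi$ is surjective if and only if this common dimension is $n$. Hence $\dim K(2n-1)=n$ if and only if $\mathcal{U}X-Y^{T}\mathcal{U}^{T}=C$ is solvable in upper-triangular matrices for every $C$, which is the proposition. I would also remark that, since Theorem~\ref{thm:odd} has already proved $\dim K(2n-1)=n$ outright, the genuine content here is the equivalent matrix-theoretic reformulation, from which the solvability of Roth's equation for this $\mathcal{U}$ drops out as a corollary.
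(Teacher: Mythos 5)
Your proposal is correct and takes essentially the same route as the paper: the paper's proof is precisely the rank--nullity argument for the linear map $\phi(\tilde{\mathcal S},\tilde{\mathcal Q})=\mathcal U\tilde{\mathcal S}-\tilde{\mathcal Q}^T\mathcal U^T$ from pairs of upper-triangular matrices to $\mathbb R^{n^2}$, observing that surjectivity is equivalent to $\dim\ker(\phi)=n(n+1)-n^2=n$ and invoking $\ker(\phi)\simeq K(2n-1)$. Your only difference is that you explicitly verify this isomorphism (injectivity via $\mathcal A=\mathcal D\mathcal Q$, surjectivity via the bookkeeping identity $\mathcal B=\mathcal J\mathcal A^T\mathcal J$ and the hypothesis $\mathcal U\tilde{\mathcal S}=(\mathcal U\tilde{\mathcal Q})^T$), which the paper asserts on the strength of its earlier parameterization, so this is added detail rather than a different method.
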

\begin{proof} The matrix equation has the desired solution if and only if the $\mathbb R-$linear map $$\phi: \mathbb R^{\frac{n(n+1)}{2}}\oplus \mathbb R^{\frac{n(n+1)}{2}}\rightarrow \mathbb R^{n^2},$$ given by $\phi(\tilde{\mathcal S},\tilde{\mathcal Q})= \mathcal U\tilde{\mathcal S}-\tilde{\mathcal Q}^T\mathcal U^T$, is surjective. \footnote{Here we have used the fact that an upper-triangular $n\times n$ matrix is described by $\frac{n(n+1)}{2}$ parameters.}

But this is equivalent to the dimension of $ker(\phi)\simeq K(2n-1)$ being equal to $n(n+1)-n^2=n$.
\end{proof}

\vskip .1in

\subsubsection{The case $r=2n$.} For this case the same things occur. For each $k=n,\ldots,2n$ we have $$\mathcal M(k)\cdot \left(\begin{array}{c}q_{0,k}\\\vdots\\q_{n-1,k-n+1}\\ \hline q_{n,k-n}\\\vdots\\q_{k,0}\end{array}\right)=\left(\begin{array}{c}0\\\vdots\\0\end{array}\right).$$

Multiplying to the left the above equation by $\mathcal N^{-1}$ and putting everything together, we obtain $$\left(\begin{array}{ccc}
q_{0,n}&\cdots&q_{0,2n}\\
q_{1,n-1}&\cdots&q_{1,2n-1}\\
\vdots& &\vdots\\
q_{n-1,1}&\cdots&q_{n-1,n+1}
\end{array}\right) = -\mathcal N^{-1}\left(\begin{array}{ccccc}
m_1&m_0&0&\cdots&0\\
m_2&m_1&m_0&\cdots&0\\
\vdots&\vdots&\vdots& &\vdots\\
m_n&m_{n-1}&m_{n-2}&\cdots&m_{0}
\end{array}\right)\left(\begin{array}{ccc}
q_{n,0}&\cdots&q_{n,n}\\
0&\cdots&q_{n+1,n-1}\\
\vdots&\ddots&\vdots\\
0&\cdots&q_{2n,0}
\end{array}\right).$$

The matrix to the left of the equality is an $n\times (n+1)$ matrix consisting of the first $n$ rows of the matrix $\mathcal Q$ we saw before. The missing row occurs as the first row in the upper-triangular matrix we see on the right.

To correct this inconvenience, let $\mathcal N'$ be the $(n+1)\times (n+1)$ matrix $$\mathcal N'=\left(\begin{array}{ccc}
0&|&\mathcal N^{-1}\\ \hline
-1&|&0
\end{array}\right),$$ and since $m_0=1$ observe that $$\mathcal Q=-\mathcal N'\left(\begin{array}{ccccc}
m_0&0&0&\cdots&0\\
m_1&m_0&0&\cdots&0\\
m_2&m_1&m_0&\cdots&0\\
\vdots&\vdots&\vdots& &\vdots\\
m_n&m_{n-1}&m_{n-2}&\cdots&m_{0}
\end{array}\right)\left(\begin{array}{ccc}
q_{n,0}&\cdots&q_{n,n}\\
0&\cdots&q_{n+1,n-1}\\
\vdots&\ddots&\vdots\\
0&\cdots&q_{2n,0}
\end{array}\right).$$

Observe that the lower triangular matrix above is exactly the matrix $\mathcal D$. Also denote with $\tilde{\mathcal Q}$, the upper triangular matrix we see on the right side of the equality.

We obtained $$\mathcal Q = -\mathcal N'\mathcal D\tilde{\mathcal Q}.$$

We have the same taking place for the parameters $s_{u,v}$: $$\mathcal S = -\mathcal N'\mathcal D\tilde{\mathcal S}.$$

We had that $\mathcal D\mathcal S=\mathcal J\mathcal Q^{T}\mathcal D^{T}\mathcal J$. Denoting with $\mathcal E=\mathcal D\mathcal N'\mathcal D$, and with $\mathcal U= \mathcal J\mathcal E$ similarly to the case $r=2n-1$, we have $$\mathcal U\tilde{\mathcal S}=(\mathcal U\tilde{\mathcal Q})^T.$$

As for the other case, with a similar proof, we have:

\begin{prop} We have $\dim K(2n)=n+1$ if and only if the matrix equation $$\mathcal UX-Y^T\mathcal U^T=C,$$ has a solution consisting of two upper-triangular matrices for any $(n+1)\times (n+1)$ matrix $C$.
\end{prop}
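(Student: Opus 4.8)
The plan is to replicate, in structure, the argument just given for the case $r=2n-1$, changing only the sizes of the matrices involved. First I would record the elementary equivalence: the matrix equation $\mathcal U X-Y^T\mathcal U^T=C$ admits a solution $(X,Y)=(\tilde{\mathcal S},\tilde{\mathcal Q})$ by upper-triangular matrices for \emph{every} right-hand side $C$ if and only if the $\mathbb R$-linear map
$$\phi:\mathbb R^{\frac{(n+1)(n+2)}{2}}\oplus\mathbb R^{\frac{(n+1)(n+2)}{2}}\longrightarrow\mathbb R^{(n+1)^2},\qquad \phi(\tilde{\mathcal S},\tilde{\mathcal Q})=\mathcal U\tilde{\mathcal S}-\tilde{\mathcal Q}^T\mathcal U^T,$$
is surjective. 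Here the two summands of the domain record the entries of the upper-triangular $(n+1)\times(n+1)$ matrices $\tilde{\mathcal S}$ and $\tilde{\mathcal Q}$, each of which is described by $\frac{(n+1)(n+2)}{2}$ free parameters, while the codomain records the $(n+1)^2$ entries of an arbitrary $C$.

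Next I would invoke rank-nullity. Since the domain has dimension $(n+1)(n+2)$ and the codomain has dimension $(n+1)^2$, we have
$$\dim\ker(\phi)=(n+1)(n+2)-\rank(\phi)\geq (n+1)(n+2)-(n+1)^2=n+1,$$
with equality precisely when $\phi$ is surjective. Hence the equation is universally solvable by upper-triangular matrices if and only if $\dim\ker(\phi)=n+1$.

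The third and decisive step is the identification $\ker(\phi)\simeq K(2n)$, which is exactly the content of the parameterization assembled in this section. An element $F\in\langle C_1,F_1\rangle_{2n}\cap\langle C_2,F_2\rangle_{2n}=K(2n)$ produces, through its expansion $(*)$ in powers of $z$ and its expansion $(***)$ in powers of $x$, reduced data $\tilde{\mathcal Q}$ and $\tilde{\mathcal S}$ that by construction satisfy $\mathcal U\tilde{\mathcal S}=(\mathcal U\tilde{\mathcal Q})^T$, i.e. $\phi(\tilde{\mathcal S},\tilde{\mathcal Q})=0$; conversely the relations $\mathcal Q=-\mathcal N'\mathcal D\tilde{\mathcal Q}$ and $\mathcal S=-\mathcal N'\mathcal D\tilde{\mathcal S}$ recover every remaining coefficient and thus reconstruct $F$ uniquely from any such pair. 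Granting this isomorphism, $\dim K(2n)=\dim\ker(\phi)$, and combining with the previous step yields $\dim K(2n)=n+1$ if and only if the matrix equation is solvable for all $C$, which is the assertion.

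The step I expect to be the genuine obstacle is precisely this identification $\ker(\phi)\simeq K(2n)$, which the author has already singled out as the real challenge: one must verify that an arbitrary pair of upper-triangular matrices $\tilde{\mathcal S},\tilde{\mathcal Q}$ lying in $\ker(\phi)$ really does extend to compatible full parameter sets $q_{a,b}$ and $s_{u,v}$ in the kernel $K$ of the block matrix, and that the two expansions $(*)$ and $(***)$ of the reconstructed $F$ are mutually consistent. By contrast, the first two steps — the equivalence between universal solvability and surjectivity, and the rank-nullity count — are purely formal linear algebra, identical in form to the $r=2n-1$ case.
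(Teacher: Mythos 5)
Your proposal is correct and follows essentially the same route as the paper, which proves this even case ``with a similar proof'' to the $r=2n-1$ proposition: the equivalence between universal solvability and surjectivity of the map $\phi(\tilde{\mathcal S},\tilde{\mathcal Q})=\mathcal U\tilde{\mathcal S}-\tilde{\mathcal Q}^T\mathcal U^T$, the rank--nullity count $(n+1)(n+2)-(n+1)^2=n+1$, and the identification $\ker(\phi)\simeq K(2n)$ coming from the parameterization $\mathcal Q=-\mathcal N'\mathcal D\tilde{\mathcal Q}$, $\mathcal S=-\mathcal N'\mathcal D\tilde{\mathcal S}$. Your closing observation that the identification $\ker(\phi)\simeq K(2n)$ is the delicate step is also consonant with the paper, which explicitly singles out the extension of a pair $(\tilde{\mathcal S},\tilde{\mathcal Q})$ satisfying the matrix equation to full parameter sets in $K$ as ``the real challenge.''
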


\vskip .3in

The equations in the two propositions above are a particular case of {\em Roth's equation} (see \cite{r}): $$AX-YB=C.$$ This equation has solutions in $X$ and $Y$ if and only if the matrices $\left(\begin{array}{ccc}
A&|&0\\ \hline
0&|&B
\end{array}\right)$ and $\left(\begin{array}{ccc}
A&|&C\\ \hline
0&|&B
\end{array}\right)$ have the same rank (see \cite{p2}, Theorem 44.3, page 198).
\vskip .1in

Though in our case $A=\mathcal U$ and $B=\mathcal U^T$, and they are invertible matrices, our goal, and challenge, is to find a special type of solution: we need $X$ to be upper-triangular and $Y$ to be lower-triangular. The next lemma presents one instance when this goal is achieved.

First, we say that an invertible matrix $W$ admits an {\em LU-decomposition} if $W$ has a decomposition: $W=VU$, with $V$ a lower-triangular matrix and $U$ an upper-triangular matrix. It is known that for any invertible matrix $W$ there exists a permutation matrix $P$ such that $PW$ has an LU-decomposition.

\begin{lem}\label{lem:triang} Let $C$ be a $p\times p$ matrix, and $W$ be an invertible $p\times p$ matrix that admits a LU-decomposition. Then there exist two upper triangular matrices $X$ and $Y$, such that $$C=WX-Y^TW^T.$$
\end{lem}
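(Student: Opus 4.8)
The plan is to use the LU-decomposition to collapse the equation into the elementary fact that every square matrix is a difference of an upper-triangular and a lower-triangular matrix. Write $W=VU$ with $V$ lower-triangular and $U$ upper-triangular, both invertible (the triangular factors of an invertible matrix are invertible); then $W^T=U^TV^T$ with $U^T$ lower-triangular and $V^T$ upper-triangular.

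First I would substitute these factorizations into $C=WX-Y^TW^T$ and conjugate by $V$, i.e. multiply on the left by $V^{-1}$ and on the right by $(V^T)^{-1}$, to obtain
$$V^{-1}C(V^T)^{-1} = UX(V^T)^{-1} - V^{-1}Y^T U^T.$$
The whole point of conjugating by the \emph{lower} factor $V$ is that it simultaneously turns $W$ into the upper-triangular $U$ (since $V^{-1}W=U$) and $W^T$ into the lower-triangular $U^T$ (since $W^T(V^T)^{-1}=U^T$). Setting $P:=UX(V^T)^{-1}$ and $Q:=V^{-1}Y^TU^T$, the matrix $P$ is then upper-triangular, being a product of upper-triangular matrices, while $Q$ is lower-triangular, because $Y$ upper-triangular forces $Y^T$ lower-triangular and the remaining factors $V^{-1},U^T$ are lower-triangular. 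Writing $\tilde C:=V^{-1}C(V^T)^{-1}$, the equation becomes $\tilde C=P-Q$.

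Next I would solve $\tilde C=P-Q$ for an upper-triangular $P$ and a lower-triangular $Q$, which is always possible: let $\tilde C_+$ be the part of $\tilde C$ on and above the diagonal and $\tilde C_-$ the strictly-below-diagonal part, and take $P:=\tilde C_+$ and $Q:=-\tilde C_-$. Finally I would unwind the substitution. Since $U,V^T,U^{-1},(V^T)^{-1}$ are all upper-triangular and $V,V^{-1},U^T,(U^T)^{-1}$ all lower-triangular, the formulas $X:=U^{-1}PV^T$ and $Y:=U^{-1}Q^TV^T$ (equivalently $Y^T=VQ(U^T)^{-1}$) produce genuinely upper-triangular matrices, and a direct check gives $WX-Y^TW^T=V(P-Q)V^T=V\tilde C V^T=C$.

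The argument is short, so there is no serious obstacle once the correct normalization is identified; the only real idea is recognizing that conjugation by the lower factor $V$ is exactly what decouples $W$ and $W^T$ into an upper-triangular and a lower-triangular matrix, after which the decomposition $\tilde C=P-Q$ is immediate. The rest is routine bookkeeping about products of triangular matrices, together with the remark that the invertibility of $W$ guarantees that $U$ and $V$ (and hence all the inverses and transposes used above) are invertible.
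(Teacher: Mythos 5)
Your proposal is correct and is essentially identical to the paper's proof: both factor $W=VU$, conjugate $C$ by the lower factor $V$ to form $C'=V^{-1}C(V^T)^{-1}$, split $C'$ as an upper-triangular minus a lower-triangular matrix, and recover $X=U^{-1}C'_uV^T$ and $Y=U^{-1}(C'_l)^TV^T$ exactly as you do (your $P,Q$ are the paper's $C'_u,C'_l$). The only cosmetic difference is that you fix a specific splitting (on-and-above versus strictly-below the diagonal) where the paper merely remarks such a splitting exists and is non-unique.
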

\begin{proof} We have the classical known properties of triangular matrices: (1) the transpose of an upper (lower) triangular matrix is a lower (upper) triangular matrix; (2) the inverse of an upper (lower) triangular matrix is an upper (lower) triangular matrix; (3) the product of two upper (lower) triangular matrices is an upper (lower) triangular matrix.

We have $W=VU$. Let $C'=V^{-1}C(V^T)^{-1}$ and write $C'=C'_u-C'_l$, where $C'_u$ is upper triangular and $C'_l$ is lower triangular (this decomposition is not unique).

Let $X=U^{-1}C'_uV^T$ and $Y=U^{-1}(C'_l)^TV^T$. Both $X$ and $Y$ are upper triangular and we have $$WX-Y^TW^T = VUU^{-1}C'_uV^T-VC'_l(U^T)^{-1}U^TV^T=VC'V^T=C.$$ \end{proof}

Based on possibly not enough experimentation, we ask the following question:

\begin{question} Let $W$ be an invertible matrix. If for any square matrix $C$ the equation $C=WX-Y^TW^T$ has a solution consisting of two upper-triangular matrices, is it true that $W$ has an LU-decomposition?
\end{question}

Regardless if the above question has an affirmative answer or not, one has the following classical criterion: an invertible matrix has LU-decomposition if and only if its leading principal minors are nonzero (see \cite{g}, page 35).

\vskip .3in

Now we go back to our problem. First, let us denote with $\bar{\mathcal N}$ the matrix $\mathcal N^{-1}$ (when $r=2n-1$), and also with $\bar{\mathcal N}$ the matrix $\mathcal N'$ (when $r=2n$). Then our invertible matrix that we would like to show has LU-decomposition is $$\mathcal U=\mathcal J\mathcal D\bar{\mathcal N}\mathcal D.$$

\begin{prop}\label{prop:propertiesU} \begin{enumerate}
                                       \item $\mathcal U$ is symmetric.
                                       \item $\mathcal J\mathcal U\mathcal J$ has LU-decomposition.
                                     \end{enumerate}
\end{prop}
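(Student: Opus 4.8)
The goal is to establish two facts about $\mathcal{U} = \mathcal{J}\mathcal{D}\bar{\mathcal{N}}\mathcal{D}$: its symmetry, and that $\mathcal{J}\mathcal{U}\mathcal{J}$ admits an LU-decomposition. The plan is to treat part (1) by a direct algebraic manipulation using the structure of the three factors, and part (2) by reducing the LU-existence question to the nonvanishing of leading principal minors via the classical criterion quoted just above, then identifying those minors with (nonzero) Schur-module dimensions.

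\textbf{Part (1): symmetry.} First I would record the relevant transposition properties of the individual factors. The exchange matrix satisfies $\mathcal{J}^T = \mathcal{J}$ and $\mathcal{J}^2 = I$. The matrix $\mathcal{D}$ is lower-triangular with a very specific Toeplitz structure (its entries are binomial coefficients $m_\ell = \binom{r+1}{\ell}$ depending only on the difference of the indices), so conjugating $\mathcal{D}$ by $\mathcal{J}$ flips it into an upper-triangular Toeplitz matrix; in fact $\mathcal{J}\mathcal{D}\mathcal{J} = \mathcal{D}^T$ because reversing the order of both rows and columns of a Toeplitz matrix transposes it. The block $\mathcal{N}$ (and hence $\bar{\mathcal{N}} = \mathcal{N}^{-1}$ in the odd case) is a Toeplitz matrix of binomials as well, and the symmetry $m_\ell = m_{2n-\ell}$ noted in the excerpt forces $\mathcal{J}\mathcal{N}\mathcal{J} = \mathcal{N}^T$, whence $\mathcal{J}\bar{\mathcal{N}}\mathcal{J} = \bar{\mathcal{N}}^T$. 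With these in hand I would compute
\[
\mathcal{U}^T = (\mathcal{J}\mathcal{D}\bar{\mathcal{N}}\mathcal{D})^T = \mathcal{D}^T\bar{\mathcal{N}}^T\mathcal{D}^T\mathcal{J} = (\mathcal{J}\mathcal{D}\mathcal{J})(\mathcal{J}\bar{\mathcal{N}}\mathcal{J})(\mathcal{J}\mathcal{D}\mathcal{J})\mathcal{J} = \mathcal{J}\mathcal{D}\bar{\mathcal{N}}\mathcal{D} = \mathcal{U},
\]
inserting $\mathcal{J}^2 = I$ between factors and using the three flip identities. The case $r = 2n$ requires checking that the bordered matrix $\mathcal{N}'$ still satisfies $\mathcal{J}\mathcal{N}'\mathcal{J} = (\mathcal{N}')^T$, which is a short verification on the border row and column.

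\textbf{Part (2): LU-decomposition of $\mathcal{J}\mathcal{U}\mathcal{J}$.} By the criterion quoted above, it suffices to show every leading principal minor of $\mathcal{J}\mathcal{U}\mathcal{J}$ is nonzero. I would expand $\mathcal{J}\mathcal{U}\mathcal{J} = \mathcal{J}(\mathcal{J}\mathcal{D}\bar{\mathcal{N}}\mathcal{D})\mathcal{J} = \mathcal{D}\bar{\mathcal{N}}\mathcal{D}\mathcal{J} = (\mathcal{J}\mathcal{D}\mathcal{J})(\mathcal{J}\bar{\mathcal{N}}\mathcal{J})(\mathcal{J}\mathcal{D}\mathcal{J}) \cdot (\mathcal{J}\mathcal{J}) \cdots$, simplifying with the flip identities to an expression whose leading principal $j \times j$ submatrices can be controlled. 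Because $\mathcal{D}$ is lower-triangular with unit diagonal ($m_0 = 1$), its leading principal minors are all $1$; the content therefore lives in $\bar{\mathcal{N}}$. I would argue that the leading principal minors of $\mathcal{J}\mathcal{U}\mathcal{J}$ equal, up to the unit factors coming from $\mathcal{D}$, the corresponding minors of $\bar{\mathcal{N}}$, and that each such minor of $\mathcal{N}$ is itself the determinant of a Toeplitz binomial matrix of the same shape as $\mathcal{N}$ but smaller size. By the Schur-module identification used in the two $\dim K$ lemmas, each of these determinants is the dimension of a nontrivial irreducible representation $S^{\{\lambda_1,\dots,\lambda_j\}}V$ of $SL(V)$, hence strictly positive and in particular nonzero.

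\textbf{Main obstacle.} The genuine difficulty is part (2): matching the leading principal minors of $\mathcal{J}\mathcal{U}\mathcal{J}$ cleanly to nonzero Schur dimensions. The factor $\bar{\mathcal{N}}$ is an \emph{inverse} of a Toeplitz binomial matrix (or, in the even case, a bordered such inverse), so its leading principal minors are ratios of determinants rather than determinants outright, and one must verify that the denominators never vanish and that the resulting ratios sit inside the Schur-dimension framework. I would handle this by using the adjugate/Jacobi complementary-minor identity to express a leading principal minor of $\mathcal{N}^{-1}$ as a trailing complementary minor of $\mathcal{N}$ divided by $\det\mathcal{N}$; since $\det\mathcal{N}$ is a nonzero Schur dimension and each complementary minor is again a binomial Toeplitz determinant (hence another Schur dimension, using $m_\ell = m_{2n-\ell}$ to preserve the shape), the ratio is a well-defined nonzero number. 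The even case $r = 2n$ is the more delicate one because of the border in $\mathcal{N}'$, and I expect the bulk of the careful bookkeeping to concentrate there; I would dispatch the first principal minor separately (it involves the $-1$ entry of $\mathcal{N}'$) before invoking the Schur-dimension positivity for the rest.
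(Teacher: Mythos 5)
Your proof is correct, and part (1) is essentially the paper's argument in different clothing: the paper writes $\mathcal U=(\mathcal J\mathcal D)(\bar{\mathcal N}\mathcal J)(\mathcal J\mathcal D)$ and observes each factor is symmetric, which is exactly your persymmetry identities $\mathcal J\mathcal D\mathcal J=\mathcal D^T$ and $\mathcal J\bar{\mathcal N}\mathcal J=\bar{\mathcal N}^T$ repackaged (and your flagged check on $\mathcal N'$ does go through, since $\mathcal N^{-1}$ inherits persymmetry from the Toeplitz matrix $\mathcal N$). For part (2), however, you take a genuinely different route at the final, decisive step. Both you and the paper reduce to the same family of ``north-east corner'' anti-diagonal minors of $\mathcal N$ --- the paper via the equivalence ``$\bar{\mathcal N}\mathcal J$ has LU iff $\mathcal J\mathcal N$ has UL,'' you via stripping the unitriangular $\mathcal D$-factors off the leading principal minors of $\mathcal D(\bar{\mathcal N}\mathcal J)\mathcal D^T$ (valid, since the leading $k\times k$ block of $LML^T$ is $L_kM_kL_k^T$ for $L$ lower unitriangular) and then applying Jacobi's complementary-minor identity. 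Where the paper then cites the Aissen--Schoenberg--Whitney theorem --- $(1+x)^{r+1}$ has only real roots and positive coefficients, so its Toeplitz matrix is totally positive and all the relevant minors of $\mathcal N$ are strictly positive --- you instead identify each corner minor directly as a Schur-module dimension: e.g.\ in the odd case $\det\left(\mathcal N_{\{1,\ldots,k\}\times\{n-k+1,\ldots,n\}}\right)=\det\left(\binom{2n}{k+i-j}\right)_{i,j=1}^k=\dim S^{\{k,\ldots,k\}}V>0$ with $\dim V=2n$, a rectangular partition fitting inside the ambient dimension. This is a legitimate alternative and arguably more self-contained here, since it reuses the exact determinant formula already invoked in the two $\dim K$ lemmas instead of importing total positivity; the paper's ASW citation buys positivity of \emph{all} minors in one line, which is cleaner but external.

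Two bookkeeping slips, neither fatal. First, the relevant minors of $\mathcal N^{-1}$ are not its leading principal minors but its rows-$\{1,\ldots,k\}$, columns-$\{n-k+1,\ldots,n\}$ corner minors (the $\mathcal J$ factor shifts the columns); Jacobi then produces the complementary minors $\det\left(\mathcal N_{\{1,\ldots,n-k\}\times\{k+1,\ldots,n\}}\right)$, which are again Toeplitz binomial determinants and hence Schur dimensions, so your argument survives the correction intact. Second, the even case is cleaner than you fear: a direct block computation gives $\mathcal N'\mathcal J=\left(\begin{array}{cc}\mathcal N^{-1}\mathcal J&0\\0&-1\end{array}\right)$, so the $-1$ border lands in the bottom-right corner and only multiplies the \emph{last} leading principal minor by $-1$; the case reduces immediately to the odd-case analysis of $\mathcal N^{-1}\mathcal J$ with no separate treatment of the first minor needed.
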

\begin{proof} To prove (1), observe that in both cases for $r$, the matrices $\mathcal J\mathcal N$ and $\mathcal J\mathcal D$ are symmetric; this is true from the fact that multiplying a matrix to the left by the exchange matrix $\mathcal J$, we reverse the rows in the matrix. If we multiply to the right, we reverse the columns.

$\mathcal J^2$ is the identity matrix, so $\mathcal N^{-1}\mathcal J$ is also symmetric. Now, writing $$\mathcal U=(\mathcal J\mathcal D)(\bar{\mathcal N}\mathcal J)(\mathcal J\mathcal D),$$ we obtain the result.

\vskip .2in

To prove (2), observe that $\mathcal D\mathcal J=\mathcal J\mathcal D^T$. With this we have $$\mathcal J\mathcal U\mathcal J=\mathcal D(\bar{\mathcal N}\mathcal J)\mathcal D^T.$$ It becomes enough to show that $\bar{\mathcal N}\mathcal J$ has LU-decomposition, since $\mathcal D$ is lower-triangular and hence $\mathcal D^T$ is upper-triangular.

By the way we denoted what $\bar{\mathcal N}$ is, it will be enough to show that $(\mathcal J\mathcal N)^{-1}$ has LU-decomposition, or equivalently, that $\mathcal J\mathcal N$ has upper-lower decomposition (i.e., UL-decomposition).

In terms of minors, we have to show that the minors of $\mathcal N$ starting from the north-east corner and moving down along the anti-diagonal to the south-west corner, are nonzero: $$det(\mathcal N_{\{1\}\times\{n\}})\neq 0, det(\mathcal N_{\{1,2\}\times\{n-1,n\}})\neq 0,\ldots,det(\mathcal N_{\{1,\ldots,n\}\times\{1,\ldots,n\}})\neq 0.$$

Our matrix $\mathcal N$ is a submatrix of the Toeplitz matrix $$A_T=(a_{j-i})_{i,j}$$ associated to the polynomial $$T(x)=(1+x)^{r+1}=a_0+a_1x+\cdots+a_{r+1}x^{r+1}.$$

The polynomial $T(x)$ has all roots real numbers, equal to $-1<0$, and has coefficients $a_i>0$. From a theorem of Aissen-Schoenberg-Whitney from 1952 (see \cite{p}, Theorem 4.5, page 105), this will imply that all the minors of $A_T$, and therefore of $\mathcal N$, are strictly larger than zero.\footnote{This also gives an alternative proof that $det(\mathcal N)\neq 0$, and so $\mathcal N$ is invertible.}

In particular, the minors of our interest are different than zero, and hence the claim.
\end{proof}

Though Proposition \ref{prop:propertiesU} (2) does not resolve our problem, it gives lower-triangular solutions for our equation $$\mathcal UX-Y^T\mathcal U=C.$$ Part (1) of Proposition \ref{prop:propertiesU} allows us to replace $\mathcal U^T$ by $\mathcal U$.

\begin{cor} For any square matrix $C$, the equation $$\mathcal UX-Y^T\mathcal U=C$$ has a solution consisting of two lower-triangular matrices $X$ and $Y$.
\end{cor}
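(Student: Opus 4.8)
The goal is to prove the Corollary, which asserts that for any square matrix $C$, the equation $\mathcal UX - Y^T\mathcal U = C$ admits a solution with $X$ and $Y$ both lower-triangular. The plan is to reduce this statement to \propref{prop:propertiesU} (2) via a change of variables implemented by the exchange matrix $\mathcal J$, and then to invoke \lemref{lem:triang} on the conjugated matrix, which we already know admits an LU-decomposition.

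First I would set $\mathcal W = \mathcal J\mathcal U\mathcal J$; by \propref{prop:propertiesU} (2) this matrix admits an LU-decomposition, and by \propref{prop:propertiesU} (1) the matrix $\mathcal U$ is symmetric, so $\mathcal U^T = \mathcal U$ and I may freely write the equation in the symmetric form $\mathcal UX - Y^T\mathcal U = C$. Since $\mathcal J^2$ is the identity, I can write $\mathcal U = \mathcal J\mathcal W\mathcal J$, so the equation becomes $\mathcal J\mathcal W\mathcal J X - Y^T\mathcal J\mathcal W\mathcal J = C$. Multiplying on the left and right by $\mathcal J$ and using $\mathcal J^2 = I$ gives $\mathcal W(\mathcal JX\mathcal J) - (\mathcal JY^T\mathcal J)\mathcal W = \mathcal JC\mathcal J$. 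Introducing the new unknowns $X' = \mathcal JX\mathcal J$ and $Y' = \mathcal JY\mathcal J$, and noting that $\mathcal JY^T\mathcal J = (\mathcal JY\mathcal J)^T = (Y')^T$ because $\mathcal J = \mathcal J^T$, the equation takes exactly the shape $\mathcal WX' - (Y')^T\mathcal W = C'$ with $C' = \mathcal JC\mathcal J$.

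Now I apply \lemref{lem:triang} with $W = \mathcal W$ (which has an LU-decomposition) and with the right-hand side $C'$: since $\mathcal W$ is symmetric (being $\mathcal J\mathcal U\mathcal J$ with $\mathcal U$ symmetric), we have $\mathcal W^T = \mathcal W$, so the lemma produces \emph{upper}-triangular matrices $X'$ and $Y'$ solving $\mathcal WX' - (Y')^T\mathcal W = C'$. Finally I translate back: from $X' = \mathcal JX\mathcal J$ we recover $X = \mathcal JX'\mathcal J$, and conjugation by $\mathcal J$ sends an upper-triangular matrix to a lower-triangular one (reversing both the row and column orderings flips the main diagonal into the anti-diagonal and hence swaps upper for lower). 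Thus $X = \mathcal JX'\mathcal J$ and $Y = \mathcal JY'\mathcal J$ are both lower-triangular and solve the original equation.

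The substantive content has already been front-loaded into \propref{prop:propertiesU} and \lemref{lem:triang}, so I expect no genuine obstacle here; the proof is essentially a bookkeeping exercise in conjugating the whole equation by $\mathcal J$. The one point demanding care is the interplay between $\mathcal J$-conjugation, transposition, and triangularity: I must verify that $\mathcal J(\cdot)^T\mathcal J = (\mathcal J(\cdot)\mathcal J)^T$ (which uses $\mathcal J^T = \mathcal J$) and that $\mathcal J$-conjugation genuinely interchanges upper- and lower-triangularity. Both are elementary properties of the exchange matrix already used implicitly in \propref{prop:propertiesU}, so the argument should go through cleanly.
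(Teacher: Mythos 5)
Your proof is correct and takes essentially the same route as the paper: both arguments apply \lemref{lem:triang} to $\mathcal W=\mathcal J\mathcal U\mathcal J$ (which has an LU-decomposition by \propref{prop:propertiesU}(2) and is symmetric by part (1), allowing $\mathcal W^T$ to be replaced by $\mathcal W$) with right-hand side $\mathcal JC\mathcal J$, and then conjugate the resulting upper-triangular solutions by $\mathcal J$ to obtain lower-triangular solutions of the original equation. Your write-up simply makes explicit the bookkeeping the paper leaves implicit, namely that $\mathcal J(\cdot)^T\mathcal J=(\mathcal J(\cdot)\mathcal J)^T$ and that conjugation by $\mathcal J$ interchanges upper- and lower-triangularity.
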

\begin{proof} From Proposition \ref{prop:propertiesU}, for any square matrix $C$, there exist two upper-triangular matrices $X'$ and $Y'$ such that $$(\mathcal J\mathcal U\mathcal J)X'-(Y')^T(\mathcal J\mathcal U\mathcal J)=\mathcal JC\mathcal J.$$

Then $X=\mathcal J X'\mathcal J$ and $Y=\mathcal J Y'\mathcal J$ are lower-triangular solutions of the equation $$\mathcal UX-Y^T\mathcal U=C.$$
\end{proof}

Is the corollary above enough to show that $\dim K(2n-1)=n$ and $\dim K(2n)=n+1$?

\vskip .1in

\textbf{Acknowledgements:} We would like to thank the anonymous referee for the useful corrections, suggestions and comments.

\renewcommand{\baselinestretch}{1.0}
\small\normalsize 

\bibliographystyle{amsalpha}

\begin{thebibliography}{10}

\bibitem{as}
        Alfeld, P., Schumaker, L.:
        On the dimension of bivariate spline spaces of smoothness $r$
        and degree $d=3r+1$.
        Numerische Math. 57, 651--661 (1990).

\bibitem{b1} Bhatia, R.:
        Positive definite matrices.
        Princeton Series in Applied Mathematics, Princeton Univ. Press (2007).

\bibitem{br91}
        Billera, L., Rose, L.:
        Gr\"obner basis methods for multivariate splines.
        In: Mathematical Methods in Computer Aided Geometric Design, pp. 93--104.
        Academic Press, Boston (1989).

\bibitem{br2}
        Billera, L., Rose, L.:
        A dimension series for multivariate splines.
        Discrete Comput. Geom. 6, 107--128 (1991).

\bibitem{clo}
        Cox, D., Little, J., O'Shea, D.:
        Using Algebraic Geometry.
        Springer-Verlag, Berlin-Heidelberg-New York (1998).

\bibitem{e} Eisenbud, D.:
        Commutative Algebra with a View Toward Algebraic Geometry.
        Springer-Verlag, New York (1995).

\bibitem{fh} Fulton, W., Harris, J.:
        Representation Theory.
        Springer-Verlag, New York (1991).

\bibitem{g} Gantmacher, F.R.:
        The theory of matrices, Vol.1.
        Chelsea Publishing Company, New York (1977).


\bibitem{m} Macdonald, I.G.:
        Symmetric Functions and Hall Polynomials.
        2nd Edition, Clarendon Press, Oxford (1995).


\bibitem{n} Nicholson, W.K.:
           Elementary Linear Algebra with Applications.
           2nd Edition, PWS-KENT Publishing Company, Boston (1986).

\bibitem{p} Pinkus, A.:
            Totally Positive Matrices.
            Cambridge University Press (2010).

\bibitem{p2} Prasolov, V.:
            Problems and Theorems in Linear Algebra.
            AMS, Translations of Math. Monographs, Vol.134 (1994).

\bibitem{r} Roth, W.E.:
            The Equations $AX-YB = C$ and $AX -XB =C$ in matrices.
            Proc. Amer. Math. Soc. 3, 392--396 (1952).

\bibitem{ss} Schenck, H., Stiller, P.:
            Cohomology vanishing and a problem in approximation theory.
            Manuscripta Math. 107, 43--58 (2002).

\bibitem{ss1} Schenck, H., Stillman, M.:
        A Family of ideals of Minimal Regularity and the Hilbert Series of $\mathcal{C}^r(\hat{\Delta})$.
        Adv. in Appl. Math. 19, 169--182 (1997).

\bibitem{ss2} Schenck, H., Stillman, M.:
               Local cohomology of bivariate splines.
               J. Pure Appl. Alg. 117 \& 118, 535--548, (1997).


\bibitem{swy} Shi, X., Wang, T., Yin, B.:
        Splines on generalized quasi-cross-cut partitions.
	J. Comp. Appl. Math. 96, 139--147 (1998).

\bibitem{s} Stanley, R.:
        Hilbert function of graded algebras.
	   Adv. in Math. 28, 57--83 (1978).

\bibitem{to} Tohaneanu, S.:
        Smooth planar $r$-splines of degree $2r$.
        J. Approx. Theory 132, 72--76 (2005).


\end{thebibliography}

\end{document}